\font\bigmath=cmsy10 scaled \magstep 3
\font\smallrm=cmr8
\newcommand{\bigtimes}{\hbox{\bigmath \char'2}}
\newcommand{\Fin}{\hbox{\rm Fin}}
\newcommand{\nats}{{\mathbb N}}
\newcommand{\ints}{{\mathbb Z}}
\newcommand{\ben}{{\mathbb N}}
\newcommand{\TM}{{\mathbb T}}
\newcommand{\emp}{\emptyset}
\def\A{\mathbb{A}}
\newtheorem{theorem}{Theorem}[section]
\newtheorem{lemma}[theorem]{Lemma}
\newtheorem{corollary}[theorem]{Corollary}
\begin{document}

\begin{frontmatter}

\title{
On additive properties of sets defined by the Thue-Morse word }

\author[label1]{Michelangelo Bucci}
  \ead{michelangelo.bucci@utu.fi}

 \author[label2]{Neil Hindman\fnref{label5}}
  \ead{nhindman@aol.com}
  
  \author[label1,label3]{Svetlana Puzynina\fnref{label6}}
  \ead{svepuz@utu.fi}

   \author[label1,label4]{Luca Q. Zamboni\fnref{label7}}
  \ead{lupastis@gmail.com}

 \address[label1]{Department of Mathematics and Statistics \& FUNDIM,
University of Turku,  Finland.}
\address[label2]{Department of Mathematics, Howard University, USA.}
  \address[label3]{Sobolev Institute of Mathematics, Russia.}
 \address[label4]{Universit\'e de Lyon 1, France.}

\fntext[label5]{Partially supported by the US National Science Foundation
under grant DMS-1160566.}
\fntext[label6]{Partially supported by the Academy of Finland under grant 251371, by RFBR (grant
10-01-00424) and by RF President grant for young scientists
(MK-4075.2012.1).}
\fntext[label7]{Partially supported by a FiDiPro grant (137991) from the Academy of Finland and by
ANR grant {\sl SUBTILE}.}

\begin{abstract}
In this paper we study some additive properties of subsets of the set $\nats$ of positive integers:
A subset $A$ of $\nats$ is called {\it $k$-summable} (where $k\in\ben$) if
$A$ contains $\textstyle \big\{\sum _{n\in F}x_n \,| \,\emp\neq F\subseteq \{1,2,\ldots,k\} \big\}$ 
for some $k$-term sequence of natural numbers $\langle x_t\rangle_{t=1}^k$ satisfying uniqueness of
finite sums.
We say $A \subseteq \nats$ is \emph{finite FS-big} if $A$ is $k$-summable  for each positive integer 
$k$.  We say is $A \subseteq \nats$ is \emph{infinite FS-big} if for each positive 
integer $k,$  $A$ contains $\{\sum _{n\in F}x_n \,| \,\emp\neq 
F\subseteq \nats\hbox{ and }\#F\leq k\}$ for some
infinite sequence of natural numbers $\langle x_t\rangle_{t=1}^\infty$ satisfying uniqueness of
finite sums. We say $A\subseteq \nats $ is an 
{\it IP-set} if $A$ contains $\{\sum _{n\in F}x_n \,| \,\emp\neq F\subseteq \nats
\hbox{ and }\#F<\infty\}$ for some
infinite sequence of natural numbers $\langle x_t\rangle_{t=1}^\infty$.  By the Finite Sums Theorem
\cite{H}, the collection of all IP-sets is \emph{partition
regular}, i.e.,  if $A$ is an IP-set then for any finite partition of $A$, 
one cell of the partition is an IP-set.
Here we prove that the collection of all finite FS-big sets is also partition regular.
Let $\TM =011010011001011010010110011010\ldots $  denote the Thue-Morse word fixed by the 
morphism $0\mapsto 01$ and $1\mapsto 10$. For each factor $u$ of $\TM$ we consider
the set $\TM\big|_u\subseteq \nats$ of all occurrences of $u$ in $\TM$. 
In this note we characterize the sets $\TM\big|_u$  in terms of the additive properties 
defined above. Using the Thue-Morse word we show that the collection of all infinite FS-big sets is not partition regular.
\end{abstract}

\begin{keyword}Partition regularity, additive combinatorics, IP-sets,  Thue-Morse infinite word.
\MSC[2010] 68R15, 05D10
\end{keyword}

\end{frontmatter}
\section{Introduction}

A fundamental result in Ramsey theory, originally due to Issai Schur \cite{Sch}, 
states that given a finite partition of the natural numbers $\nats$, one cell of the 
partition contains two points $x,y$ and their sum $x+y$. 
An extension of Schur's Theorem, which we will call the {\it finite Finite Sums Theorem\/}
states that whenever $\nats$ is finitely partitioned, there exist arbitrarily large 
sets of numbers all of whose sums belong to the same element of the partition. 
The finite Finite Sums Theorem is an easy consequence of Rado's Theorem \cite{R}.
Given a finite sequence $\langle x_t\rangle_{t=1}^k$ or an infinite 
sequence $\langle x_t\rangle_{t=1}^\infty$ in $\nats$ we say that the sequence satisfies
{\it uniqueness of finite sums\/} provided that whenever $F$ and $H$ are finite
nonempty subsets of the domain of the the sequence and $\sum_{t\in F}\,x_t=\sum_{t\in H}\,x_t$,
one must have $F=H$.
For $k$ a positive integer, we say that a subset $A$ of $\nats$ is $k$-\emph{summable}, if $A$
contains all finite sums of distinct terms of some $k$-term sequence 
$\langle x_n\rangle_{n=1}^{k}$ of natural numbers satisfying uniqueness of finite sums. We say that 
$A\subseteq \nats$ is $k^\infty$-\emph{summable} if there
exists an infinite sequence $\langle x_n\rangle_{n=1}^{\infty}$ of natural numbers 
satisfying uniqueness of finite sums such
that $A$ contains all sums of at most $k$ distinct terms of $\langle x_n\rangle_{n=1}^{\infty}$.
As a consequence of the (infinite) Finite Sums Theorem, 
given any finite partition of $\nats$, one element of the partition is $k^\infty$-summable.

In this paper we consider three different families of subsets of $\nats$ each characterized by an 
additive property which may be regarded as an extension of the finite Finite Sums Theorem:  
finite FS-big,  infinite FS-big, and IP-sets.
A subset $A$ of $\nats$ is called \emph{finite FS-big} if it is $k$-summable for every positive integer $k$. 
A subset $A$ of
$\nats$ is  called \emph{infinite FS-big} if  it is $k^\infty$-summable for every positive integer $k$.
A subset $A$ of $\nats$ is called an
\emph{IP-set} if $A$ contains all finite sums of distinct terms of
some infinite sequence $\langle x_n\rangle_{n=1}^\infty $ of natural numbers.

A collection of sets  ${\mathcal S}$ is said to be {\it partition regular\/} if for each $A\in {\mathcal S},$ whenever $A$ is partitioned into finitely many sets, at least one set of the partition is in ${\mathcal S}.$    By the Finite Sums Theorem, the collection of all IP-sets is 
partition regular. Other examples of partition regular families are sets having positive upper 
density, and sets having arbitrarily long arithmetic progressions.  (This latter fact is an 
almost immediate consequence of van der Waerden's Theorem \cite{vdW}.  Assume $A\subseteq\nats$ contains
arbitrarily long arithmetic progressions. Let  $k,r\in\nats$, and let $A=\bigcup_{i=1}^rC_i$.  
By van der Waerden's
Theorem pick $n$ such that whenever $\{1,2,\ldots,n\}$ is partitioned into $r$ classes, one class contains
a length $k$ arithmetic progression.  Pick $a$ and $d$ in $\ben$ such that 
$\{a+d,a+2d,\ldots,a+nd\}\subseteq A$.
For $i\in\{1,\ldots,r\}$ let $B_i=\{t\in\{1,2,\ldots,n\}\,|\,a+td\in C_i\}$.  Pick $i$, $b$, and $c$ such that
$\{b+c,b+2c,\ldots,b+kc\} \subseteq B_i$.  Then $\{a+bd+cd,a+bd+2cd,\ldots,a+bd+kcd\}\subseteq C_i$.)

We shall show in Section 2 that the collection of all finite FS-big subsets of $\nats$ is partition regular (see Theorem~\ref{FSBPR}).
In contrast,  for any fixed value of $k$, the property of being $k$-summable or $k^\infty$-summable is 
not partition regular.  For example, the set $A=\{n\in\ben\,|\,n\not\equiv 0\hbox{ mod }3 \}$ 
is clearly $2^\infty$-summable. On the other hand $A=A_1\cup A_2$
where $A_1=\{n\in\ben\,|\,n\equiv 1 \mod 3 \}$ and $A_2=\{n\in \ben\,|\,n\equiv 2 \mod 3 \},$ 
and neither $A_i$ is $2$-summable. Nevertheless, for each fixed $k$ we could consider the set
\[\mathcal{R}^\infty(k)=\{ A\subseteq \nats\,|\, \mbox{whenever }r\in\nats
\mbox{ and }A=\bigcup_{i=0}^rA_i,\,\,\exists \, 0\leq i\leq r\,\mbox{such that} \,A_i \,
\mbox{is $k^\infty$-summable}\}\]
Then each $\mathcal{R}^\infty(k)$ is non-empty.
In fact every IP-set belongs to $\mathcal{R}^\infty(k)$. It is a difficult open question of Imre Leader's
\cite[Question 8.1]{BR} whether there is any member of $\mathcal{R}^\infty(2)$ which is not an IP-set.
In general, the question of determining whether a given subset $A\subseteq
\nats$ is in $\mathcal{R}^\infty(k)$ or is an IP-set  is typically
quite difficult, even if for every $A,$ either $A$ or its
complement belongs to $\mathcal{R}^\infty(k)$ or is an IP-set. 

In this note we 
focus on sets $A$ which are defined in terms of the binary expansions of its elements. In this respect it is natural to consider the {\it Thue-Morse infinite word} $\TM=t_0t_1t_2t_3\ldots \in \{0,1\}^\omega$
where $t_n$ is defined as the sum modulo $2$ of the digits  in the binary expansion of
$n.$  The Thue-Morse word is  $2$-automatic~\cite{AS2}: In fact $t_n$ is computed by feeding the binary expansion of $n$ in the deterministic finite automata depicted in Figure~\ref{Gtau}. Starting from the initial state labeled $0,$ we read the binary expansion of $n$  starting from the most significant digit. Then $t_n$ is the corresponding terminal state. For example, the binary representation of $13$ is $1101$ and the path $1101$ starting at $0$ terminates at vertex $1.$ Whence $t_{13}=1.$ 

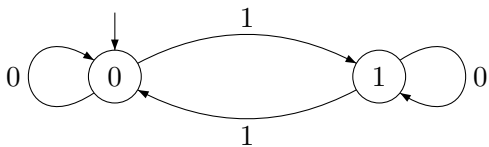
\begin{figure}[h]
  \begin{center}
    \unitlength=4pt
    \begin{picture}(25, 10)(0,-5)
      \gasset{Nw=5,Nh=5,Nmr=2.5,curvedepth=4}
      \thinlines
      \node[Nmarks=i,iangle=90](A1)(0,0){$0$}
      \drawloop[loopangle=180](A1){$0$}
      \node[](A2)(25,0){$1$}
      \drawloop[loopangle=0](A2){$0$}
      \drawedge(A1,A2){$1$}
      \drawedge(A2,A1){$1$}
    \end{picture}
  \end{center}
  \caption{The Thue-Morse automaton}
\end{figure}\label{Gtau}

The origins of $\TM$ go back to the beginning of the last century with the works of  the Norwegian mathematician Axel Thue \cite{Th1, Th2}. Thue noted that every binary word of length four contains a square, that is two consecutive equal blocks $XX.$ He then asked whether it was possible to find an infinite word on $3$ distinct symbols which avoided squares.
He also asked whether there exists an infinite binary word without cubes, that is with no three consecutive equal blocks.  Thue showed that in each case the answer is positive and constructed this very special infinite word $\TM$ to produce the desired words. In fact the word $\TM$ contains no fractional power greater than $2,$ i.e., contains no word of the form $XXX'$ where $X'$ is a prefix of $X.$
Thue's work originally appeared in an obscure Norwegian journal and for many years remained largely unknown and unappreciated.

A few years later in the 1920s, Marston Morse and Gustav Hedlund \cite{MoHe1, MoHe2} were pioneering a new branch of mathematics known as Symbolic Dynamics, inspired by the study of various classical  dynamical systems dating back to Newton.  
The basic idea of symbolic dynamics consists in dividing up the set of possible states into a finite number of pieces. By discretizing both space and time, one could model a dynamical system $(X,T)$ by a space consisting of infinite words of abstract symbols, each symbol corresponding to a state of the system, and a shift operator corresponding to the dynamics. Thus from this point of view, the orbits of motion are described as {\it symbolic trajectories} or {\it flows}. A periodic orbit would give rise to a periodic infinite word, while an aperiodic orbit would correspond to an aperiodic infinite word.

Curiously enough, these foundational works of Morse and Hedlund exhibited strong ties with the earlier work of Thue. This connection stems through the use of infinite words to describe infinite geodesic curves on a surface of negative curvature. And so, the word $\TM$ originally defined by Thue to study combinatorial properties of words was
rediscovered in 1921 by Morse \cite{Mo} in connection with differential geometry.
He proved that every surface of negative curvature having at least two normal segments, admits a continuum of recurrent aperiodic geodesics.

An alternative definition of the Thue-Morse word which will be useful to us is in terms of the  
morphism $\tau :\{0,1\}\rightarrow \{0,1\}^*$ given by $0\mapsto 01$ and $1\mapsto 10.$
More precisely, iterating $\tau$ on the symbol $0$ gives
\[0\mapsto 01 \mapsto 0110 \mapsto 01101001 \mapsto  0110100110010110 \mapsto \cdots.\]
In general, $\tau^{n+1}(0)=\tau^n(0)\overline{\tau^n(0)}$ where $\overline{\tau^n(0)}$ is obtained from 
$\tau^n(0)$ by exchanging $0$'s and $1$'s. In particular, since $\tau^n(0)$ is a prefix of $\tau^{n+1}(0),$
the sequence $(\tau^n(0))_{n\geq 0}$ tends in the limit  to the infinite word 
\[\TM =0110100110010110100101100110100110010110\ldots  \] 
For more background and information on the 
Thue-Morse word we refer the reader to \cite{AS1} or \cite{AS2}.

 Let $\overline{\TM}$ denote the word obtained from $\TM$ by exchange of 
$0$'s and $1$'s, i.e., $\overline{\TM}$ is the fixed point of the Thue-Morse morphism beginning in $1$.
We consider subsets of $\nats$ defined by the Thue-Morse word via occurrences of
its factors. More precisely, writing $\TM=t_0t_1t_2\ldots $ with $t_i\in \{0,1\},$
for each factor $u$  of $\TM$ we set
\[\TM\big|_{u}=\{ n\in \nats \,|\, t_nt_{n+1}\ldots t_{n+|u|-1}=u\}.\]
In other words, $\TM\big|_u$ denotes the set of all occurrences
of $u$ in $\TM$. The main result 
of this note is to obtain a full characterization of each of the sets
$\TM\big|_u$ in terms of the three additive properties defined above. We show that factors of the 
Thue-Morse word can be split into three classes: one corresponding to factors $u$ for which
$\TM\big|_u$ is an IP-set; these factors are precisely all prefixes of $\TM$. The second class 
consists of all factors $u$ such that $\TM\big|_u$ is infinite FS-big but not an IP-set; 
this corresponds to all prefixes of $\overline{\TM}$. Finally, for all remaining factors 
$u$ of $\TM$, the set $\TM\big|_u$ is not $3$-summable, and in some cases not even $2$-summable (see Theorem~\ref{main}). 
We also show that the set $\TM\big|_1$ may be partitioned into two cells neither of 
which is $2^\infty$-summable (see Lemma~\ref{TM1}). Thus, the collection of all infinite FS-big sets is not partition
regular (see Corollary~\ref{notpartreg}). As pointed out to us by the referees of this paper, this  latter point may be proved independently of the Thue-Morse word (either directly using the binary representation of digits without reference to $\TM$, or via other digital representations of the integers). Our use of $\TM$
is merely one of convenience as it provides a uniform framework on which to investigate the various additive properties defined above. 

\noindent We conclude this introduction with some notation that we will be using.
We denote the set of all $k$-summable subsets of $\nats$ by $\Sigma_k$ and  the set of all finite FS-big  subsets of $\nats$
by $\Sigma$. Thus, $\Sigma= \bigcap_{k\geq 1} \Sigma_k$.
We denote the set of all $k^\infty$-summable sets by $\Sigma_k^\infty$ and the set of all infinite FS-big sets by $\Sigma^\infty$ so that $\Sigma^\infty= \bigcap_{k\geq
1} \Sigma_k^{\infty}$. 
It is immediate that $\Sigma_{k+1}^{\infty}\subseteq \Sigma_k^{\infty}$ and
$\Sigma_{k}^\infty\subseteq \Sigma_k$.

We have seen that $\{n\in\ben\,|\,n\not\equiv 0\ \mbox{mod}\,3 \}\in \Sigma^\infty_2\setminus\Sigma_3$.
More generally, for each $k>2$ we have that $\{n\in\ben\,|\,n\not\equiv 0\ \mbox{mod}\,k \}\in \Sigma^\infty_{k-1}\setminus\Sigma_k.$ This follows immediately from the following simple lemma which is likely a well known fact but the authors were unable to find it in the literature. We thus include  a proof here for the sake of completeness.

\begin{lemma}
Given any set $S$ of $k$ nonnegative integers, some subset $L$ of $S$ sums up to $0$ modulo $k,$ i.e., $\sum_{x\in L}x \equiv 0\ \mbox{mod}\,k$ for some $L\subseteq S.$ 
\end{lemma}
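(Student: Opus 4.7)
The plan is a one-step application of the pigeonhole principle to partial sums. Write $S=\{a_1,a_2,\ldots,a_k\}$ in any order and, for each $0\le i\le k$, let $s_i=a_1+a_2+\cdots+a_i$ (with the convention $s_0=0$). This gives $k+1$ nonnegative integers $s_0,s_1,\ldots,s_k$.

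Now look at the residues of the $s_i$ modulo $k$. Since there are $k+1$ of them and only $k$ residue classes, there must exist indices $0\le i<j\le k$ with $s_i\equiv s_j\pmod{k}$. Take $L=\{a_{i+1},a_{i+2},\ldots,a_j\}$; this is a nonempty subset of $S$, and
\[
\sum_{x\in L}x \;=\; s_j-s_i \;\equiv\; 0 \pmod{k},
\]
as required. There is no real obstacle here — the only thing to note is that the partial sums must start from $s_0=0$ so that we get $k+1$ values rather than $k$, which is precisely what makes pigeonhole bite and also what guarantees that $L$ is nonempty (since $j>i$).
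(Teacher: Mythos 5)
Your proof is correct, and it takes a genuinely different route from the paper's. You use the classical prefix-sum pigeonhole: the $k+1$ partial sums $s_0=0,s_1,\ldots,s_k$ must collide modulo $k$, and the difference of two colliding sums yields a nonempty subset (of consecutive elements in your chosen ordering) summing to $0$. The paper instead works inside the cyclic group $\ints_k$ and runs a sumset-growth argument: it sets $C_0=\{0\}$ and $C_{i+1}=C_i\cup(C_i+x_{i+1})$, and observes that if $0\notin C_i+x_{i+1}$ then $\#C_{i+1}>\#C_i$ (translation by $x_{i+1}$ preserves cardinality, so $C_i+x_{i+1}\subseteq C_i$ would force $C_i+x_{i+1}=C_i$ and hence $0\in C_i+x_{i+1}$, since $0\in C_i$); thus after $k$ steps one would have $\#C_k\geq k+1$, impossible in $\ints_k$. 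Both arguments are elementary counting arguments of the same strength ($k+1$ objects versus $k$ slots) and both transfer verbatim to any finite abelian group of order $k$; yours is the shorter and more standard one, and it has the small aesthetic advantage that the subset it produces is an interval in the chosen ordering, while the paper's recursion tracks the full set of achievable subset sums, which is the kind of bookkeeping that scales to finer structural statements about sumsets. One remark: the lemma as literally stated is vacuous if $L=\emptyset$ is allowed (the empty sum is $0$); like the paper, you correctly prove the intended nonempty version, and your observation that $j>i$ guarantees nonemptiness is exactly the right point to flag.
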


\begin{proof} Equivalently, given a $k$-term sequence $\langle x_i\rangle _{i=1}^k$ in the cyclic group $\ints_k$ of order $k,$ we will show that some subsequence sums up to $0.$  To see this, for each $0\leq i\leq k,$ define sets $C_i\subseteq \ints_k$ recursively as follows:  $C_0=\{0\}$  
and for $i\geq 0,$ set
$C_{i+1}=C_i \,\bigcup \, (C_i+ x_{i+1}),$ in other words  \[C_{i+1}= \{0\} \, \bigcup \, \{\sum _{i\in F}x_i\,|\, F\subseteq \{1,2\,\ldots ,i+1\}\}.\] We claim that $0\in C_i+ x_{i+1}$ for some $0\leq i\leq k-1,$ i.e., some subsequence of   $\langle x_j\rangle _{j=1}^k$ sums to $0.$ In fact, for each  $0\leq i\leq k-1,$ 
if $ 0 \notin C_i+ x_{i+1}$ then  $\#C_{i+1}>\#C_i$ (since $0\in C_i).$
Thus if for every $0\leq i\leq k-1$ we had that $ 0 \notin C_i+ x_{i+1},$ then $\#C_k \geq k+\#C_0=k+1,$ a contradiction (since $C_k \subseteq \ints_k).$ 
\end{proof}

\noindent{\bf Acknowledgements:} The authors would like to express their gratitude to the two referees of this paper. In particular one of the referees suggested a simplification to the proof of item (1) of Theorem~\ref{main} which we decided to use, as well as an alternative simple but clever proof that the collection of all infinite FS-big sets is not partition regular which we included as one of two proofs of Corollary~\ref{notpartreg}.

\section{Finite FS-big sets}

In this section we prove that the collection of all finite FS-big sets is partition regular (see Theorem~\ref{FSBPR} below). Surprisingly the authors were unable to find a proof of this fact in the existing literature. Thus we take this opportunity to present two different derivations: Our first proof is a straightforward application of the so-called finite Finite Unions Theorem (see Theorem~\ref{FFU}  below) and is by now quite routine to experts in Ramsey theory.  Our second proof uses a clever argument  suggested to us by Imre Leader which establishes Theorem \ref{FFU}
using only the finite Finite Sums Theorem (Theorem \ref{FFS} ) and Ramsey's Theorem \cite{Ram}.

Throughout this section we will
use the notation $\Fin(A)$ for the set of all
non-empty finite subsets of $A$ and let $FS(\langle x_t\rangle_{t\in A})=
\{\sum_{t\in F}\,x_t\,|\,F\in\Fin(A)\}$.

We first observe that we had some choices to make when we defined $k$-summable.
That is, we could have defined $A$ to be $k$-summable${}_1$ if there is a 
sequence $\langle x_t\rangle_{t=1}^k$ such that $FS(\langle x_t\rangle_{t=1}^k)\subseteq A$;
 we could have defined $A$ to be $k$-summable${}_2$ if there is an increasing 
sequence $\langle x_t\rangle_{t=1}^k$ such that $FS(\langle x_t\rangle_{t=1}^k)\subseteq A$;
and  we could have defined $A$ to be $k$-summable${}_3$ if there is a 
sequence $\langle x_t\rangle_{t=1}^k$ satisfying uniqueness of finite sums 
such that $FS(\langle x_t\rangle_{t=1}^k)\subseteq A$.  (We actually chose
$k$-summable${}_3$ because it generalizes most naturally to arbitrary semigroups as in
Theorem \ref{FSBPR}.)  These notions are progressively strictly stronger.  For
example if $k>1$, $\{1,2,\ldots,k\}$ is $k$-summable${}_1$ but not $k$-summable${}_2$.
And if $k>1$, $\{1,2,\ldots,\frac{k^2+k}{2}\}$ is $k$-summable${}_2$ but not $k$-summable${}_3$.
However, for the notion of finite FS-big subsets of $\nats$, 
it does not matter which choice was made for $k$-summable.
The reason is that for each $k$ there is some $m$ such that if $A$ is an $m$-summable${}_1$
subset of $\nats$,
then $A$ is $k$-summable${}_2$. Similarly, for each $k$ there is some $m$ such that if 
$A$ is an $m$-summable${}_2$ subset of $\nats$, then $A$ is $k$-summable${}_3$.

The main key for proving that finite FS-big sets are partition regular is the finite Finite Unions
Theorem.  The first proof that we will present, and much the simpler of the two, uses
a standard compactness argument and the infinite Finite Unions Theorem.

\begin{theorem}[Infinite Finite Unions Theorem]\label{IFU}  Let $r\in\ben$.
If $\Fin(\ben)=\bigcup_{i=1}^r{\mathcal F}_i$, then there exist $i\in\{1,2,\ldots,r\}$ and 
a sequence $\langle F_t\rangle_{t=1}^\infty$ in $\Fin(\ben)$ such that for each
$t\in\ben$, $\max F_t<\min F_{t+1}$ and for each $H\in\Fin(\ben)$, 
$\bigcup_{t\in H}\,F_t\in{\mathcal F}_i$.\end{theorem}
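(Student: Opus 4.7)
The plan is to derive Theorem \ref{IFU} from Hindman's Finite Sums Theorem \cite{H} by translating the problem from $\Fin(\ben)$ to $\ben$ through the standard binary-support bijection. Let $\phi\colon\Fin(\ben)\to\ben$ be defined by $\phi(F)=\sum_{n\in F}2^{n-1}$; uniqueness of binary expansions makes $\phi$ a bijection, whose inverse assigns to each positive integer $m$ its \emph{binary support}, which I denote $\mathrm{supp}(m)$. The given partition $\Fin(\ben)=\bigcup_{i=1}^r\mathcal{F}_i$ pushes forward to a partition $\ben=\bigcup_{i=1}^r\phi(\mathcal{F}_i)$, and the Finite Sums Theorem produces an infinite sequence $\langle y_t\rangle_{t=1}^\infty$ in $\ben$ and an index $i\in\{1,\ldots,r\}$ such that $FS(\langle y_t\rangle_{t=1}^\infty)\subseteq\phi(\mathcal{F}_i)$.

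To convert sums of the $y_t$'s into disjoint unions, I would next extract a subsequence $\langle y_{t_k}\rangle_{k=1}^\infty$ whose binary supports satisfy
\[
\max\mathrm{supp}(y_{t_k})<\min\mathrm{supp}(y_{t_{k+1}})\quad\text{for every }k\in\ben.
\]
Such a subsequence exists by an easy greedy construction: for any fixed $N$, only finitely many terms $y_t$ can satisfy $\max\mathrm{supp}(y_t)\le N$ (as they are all bounded by $2^{N+1}$), so after choosing $y_{t_k}$ one can always find a later $y_{t_{k+1}}$ whose support lies strictly above $\max\mathrm{supp}(y_{t_k})$. Setting $F_k:=\mathrm{supp}(y_{t_k})$ then yields finite sets with $\max F_k<\min F_{k+1}$, as required in the conclusion.

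It remains to verify the union property. For any $H\in\Fin(\ben)$, the sum $\sum_{k\in H}y_{t_k}$ still belongs to $FS(\langle y_t\rangle_{t=1}^\infty)\subseteq\phi(\mathcal{F}_i)$. Since the supports $\mathrm{supp}(y_{t_k})$ are pairwise disjoint, their binary expansions add without carries, so
\[
\phi\Big(\bigcup_{k\in H}F_k\Big)=\sum_{k\in H}\phi(F_k)=\sum_{k\in H}y_{t_k}\in\phi(\mathcal{F}_i),
\]
which forces $\bigcup_{k\in H}F_k\in\mathcal{F}_i$ and completes the argument.

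In this approach the Finite Sums Theorem does the heavy lifting; the only subtle point is the disjoint-support thinning, since a generic sequence $\langle y_t\rangle$ furnished by Hindman's theorem has no reason to respect disjointness in binary. Ensuring blockwise ordered, pairwise disjoint supports is precisely what turns Hindman-style sums into unions, and it is the one place where the argument uses a genuine (though elementary) combinatorial step beyond the Finite Sums Theorem itself.
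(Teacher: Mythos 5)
Your overall route --- pushing the partition of $\Fin(\ben)$ forward to a partition of $\ben$ via binary supports, applying the Finite Sums Theorem, and then arranging ordered supports so that carry-free sums become unions --- is exactly the paper's route, but your thinning step contains a genuine gap. Your greedy argument shows only that arbitrarily late terms $y_t$ satisfy $\max\,\mathrm{supp}(y_t)>N$, i.e.\ that later terms are \emph{large}; what you actually need is a later term with $\min\,\mathrm{supp}(y_t)>N$, i.e.\ a term divisible by $2^{N+1}$. A number can be enormous while still having its lowest bit at position $0$. Concretely, nothing prevents the Finite Sums Theorem from handing you a sequence all of whose terms are odd: for $r=1$ the sequence $y_t=2^t+1$ is a legitimate output, every term has the bottom bit in its support, so no two terms have disjoint supports and \emph{no} subsequence satisfies $\max\,\mathrm{supp}(y_{t_k})<\min\,\mathrm{supp}(y_{t_{k+1}})$. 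The desired subsequence simply need not exist, so the proof breaks at this point. (A secondary issue: the claim that only finitely many terms have $\max\,\mathrm{supp}(y_t)\le N$ presumes the terms are distinct, which the bare statement of the Finite Sums Theorem does not guarantee.)

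The repair --- and this is what the paper's proof does --- is to pass not to a subsequence but to a \emph{sum subsystem}: block sums $z_n=\sum_{t\in F_n}y_t$ over index blocks $F_1,F_2,\ldots$ with $\max F_n<\min F_{n+1}$. Such block sums still lie in $FS(\langle y_t\rangle_{t=1}^\infty)$, hence still in the chosen cell, and unlike single terms they \emph{can} be forced to have ordered supports: if $2^l\le z_n$, then among the partial sums $y_{m+1}+\cdots+y_{m+j}$ for $0\le j\le 2^{l+1}$ (taken beyond the indices already used), two are congruent modulo $2^{l+1}$ by the pigeonhole principle, and their difference is a block sum divisible by $2^{l+1}$, i.e.\ with minimum support exceeding $l\ge\max\,\mathrm{supp}(z_n)$. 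Iterating produces $\max\,\mathrm{supp}(z_n)<\min\,\mathrm{supp}(z_{n+1})$, and then your final paragraph applies verbatim with $F_k=\mathrm{supp}(z_k)$, since sums of the $z_n$ are again finite sums of the $y_t$. This block-sum pigeonhole is precisely the ``genuine combinatorial step beyond the Finite Sums Theorem'' you allude to at the end; it cannot be replaced by subsequence extraction.
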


\begin{proof} This is actually stated in \cite{H}.  A much easier proof is 
in \cite[Corollary 5.17]{HS}. It is an immediate corollary of the (infinite) Finite Sums Theorem,
because, given any sequence $\langle x_t\rangle_{t=1}^\infty$ in $\ben$ one may choose
a sequence $\langle F_n\rangle_{n=1}^\infty$ in $\Fin(\ben)$ such that for each
$n\in\ben$, $\max F_n<\min F_{n+1}$ and for each $n$ and $l$ in $\ben$,
if $2^l\leq\sum_{t\in F_n}x_t$, then $2^{l+1}$ divides $\sum_{t\in F_{n+1}}x_t$.
(That is, the maximum of the binary support of $\sum_{t\in F_n}x_t$ is less than
the minimum of the binary support of $\sum_{t\in F_{n+1}}x_t$.)
\end{proof}

\begin{theorem}[Finite Finite Unions Theorem]\label{FFU} Let $r,k\in\ben$.  
There is some $m\in\ben$ such that whenever 
$\Fin(\{1,2,\ldots,m\})=\bigcup_{i=1}^r{\mathcal F}_i$, there exist $i\in\{1,2,\ldots,r\}$ and 
a sequence $\langle F_t\rangle_{t=1}^k$ in $\Fin(\{1,2,\ldots,m\})$ such that for each
$t\in\{1,2,\ldots,k-1\}$, if any, $\max F_t<\min F_{t+1}$ and for each $H\in\Fin(\{1,2,\ldots,k\})$, 
$\bigcup_{t\in H}\,F_t\in{\mathcal F}_i$.\end{theorem}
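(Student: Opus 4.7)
The plan is a standard compactness reduction: we will derive the finite statement (Theorem~\ref{FFU}) from the infinite one (Theorem~\ref{IFU}) by contradiction, pulling back a ``bad'' coloring of $\Fin(\ben)$ from a sequence of bad colorings of $\Fin(\{1,2,\ldots,m\})$.

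Suppose for contradiction that Theorem~\ref{FFU} fails for some fixed $r,k\in\ben$. Then for every $m\in\ben$ there exists a coloring $\chi_m:\Fin(\{1,2,\ldots,m\})\to\{1,2,\ldots,r\}$ witnessing failure, i.e.\ no $k$-term sequence $\langle F_t\rangle_{t=1}^k$ in $\Fin(\{1,2,\ldots,m\})$ with $\max F_t<\min F_{t+1}$ has all of its unions $\bigcup_{t\in H}F_t$ ($H\in\Fin(\{1,2,\ldots,k\})$) lying in a single color class. Extend each $\chi_m$ arbitrarily to a coloring $\tilde\chi_m:\Fin(\ben)\to\{1,2,\ldots,r\}$.

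The space $\{1,2,\ldots,r\}^{\Fin(\ben)}$ is compact in the product topology (Tychonoff), and $\Fin(\ben)$ is countable, so by a diagonal argument I pass to a subsequence $\langle\tilde\chi_{m_j}\rangle_{j=1}^\infty$ converging pointwise to some coloring $\chi:\Fin(\ben)\to\{1,2,\ldots,r\}$. Apply Theorem~\ref{IFU} to $\chi$ to obtain a color $i\in\{1,2,\ldots,r\}$ and an infinite sequence $\langle F_t\rangle_{t=1}^\infty$ in $\Fin(\ben)$ with $\max F_t<\min F_{t+1}$ such that $\chi\bigl(\bigcup_{t\in H}F_t\bigr)=i$ for every $H\in\Fin(\ben)$.

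Now truncate to the first $k$ terms and let $N=\max F_k$, so that all the finitely many sets $\bigcup_{t\in H}F_t$ with $H\in\Fin(\{1,2,\ldots,k\})$ lie in $\Fin(\{1,2,\ldots,N\})$. By pointwise convergence on this finite collection of sets, there exists $j$ with $m_j\geq N$ such that $\tilde\chi_{m_j}$ agrees with $\chi$ on all of them. Since each of these sets is actually in $\Fin(\{1,2,\ldots,m_j\})$, we have $\chi_{m_j}\bigl(\bigcup_{t\in H}F_t\bigr)=i$ for every $H\in\Fin(\{1,2,\ldots,k\})$, contradicting the choice of $\chi_{m_j}$ as a bad coloring. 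This contradiction proves the theorem.

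The only genuinely delicate step is the pointwise-convergent subsequence extraction; the main obstacle is bookkeeping to ensure that once the sequence $F_1,\ldots,F_k$ has been produced from $\chi$, we can find a single index $j$ large enough that $m_j$ contains all the $F_t$'s \emph{and} $\tilde\chi_{m_j}$ already agrees with $\chi$ on the (finitely many) relevant unions. Compactness of $\{1,2,\ldots,r\}^{\Fin(\ben)}$ delivers this, and the rest of the argument is routine.
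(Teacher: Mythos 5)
Your proof is correct and follows essentially the same route as the paper: a compactness argument in $\{1,2,\ldots,r\}^{\Fin(\ben)}$ that pulls back a limiting coloring from the assumed bad colorings and applies Theorem~\ref{IFU} to it. The only cosmetic difference is that you extract a pointwise convergent subsequence via a diagonal argument (using countability of $\Fin(\ben)$), whereas the paper takes a cluster point of the sequence $\langle\sigma_m\rangle_{m=1}^\infty$; both devices deliver the same finite-agreement step, and your handling of it (agreement on \emph{all} unions $\bigcup_{t\in H}F_t$, $H\in\Fin(\{1,2,\ldots,k\})$, not just on $F_1,\ldots,F_k$) is exactly what is needed.
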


\begin{proof}  Suppose not.  For each $m\in\ben$ pick a function 
$\psi_m:\Fin(\{1,2,\ldots,m\})\to\{1,2,\ldots,r\}$ with the property that there do not exist
$i\in\{1,2,\ldots,r\}$ and a sequence $\langle F_t\rangle_{t=1}^k$ in $\Fin(\{1,2,\ldots,m\})$ such that for each
$t\in\{1,2,\ldots,k-1\}$, if any, $\max F_t<\min F_{t+1}$ and for each $H\in\Fin(\{1,2,\ldots,k\})$, 
$\psi_m(\bigcup_{t\in H}\,F_t)=i$.  Define $\sigma_m:\Fin(\ben)\to\{1,2,\ldots,r\}$ by
$\sigma_m(F)=\psi_m(F)$ if $F\subseteq \{1,2,\ldots,m\}$ and $\sigma_m(F)=1$ otherwise.

Give $\{1,2,\ldots,r\}$ the discrete topology and let 
$X=\bigtimes_{F\in\hbox{\smallrm Fin}(\ben)}\{1,2,\ldots,r\}$ with
the product topology.  Then $X$ is compact and $\langle \sigma_m\rangle_{m=1}^\infty$ is a sequence
in $X$ so pick a cluster point $\varphi$ of $\langle \sigma_m\rangle_{m=1}^\infty$.  Pick by Theorem
\ref{IFU}, $i\in\{1,2,\ldots,r\}$ and 
a sequence $\langle F_t\rangle_{t=1}^\infty$ in $\Fin(\ben)$ such that for each
$t\in\ben$, $\max F_t<\min F_{t+1}$ and for each $H\in\Fin(\ben)$, 
$\varphi(\bigcup_{t\in H}\,F_t )=i$.  Let  $$U=\big\{\mu\in X\,|\,\mu(F_i)=\varphi(F_i)\hbox{ for all }
i\in\{1,2,\ldots,k\}\big\}\,.$$  Then $U$ is a neighborhood of $\varphi$ in $X$ so pick $m>\max F_k$
such that $\sigma_m\in U$.  Then for each $H\in\Fin(\{1,2,\ldots,k\})$,
$\psi_m(\bigcup_{t\in H}\,F_t)=\sigma_m(\bigcup_{t\in H}\,F_t)=\varphi(\bigcup_{t\in H}\,F_t)=i$,
a contradiction.\end{proof}

The definition of FS-big makes sense in an arbitrary semigroup $(S,+)$.  (Even though we are
writing the semigroup additively, we are not assuming commutativity, so we need to specify that
the sums are taken in increasing order of indices.) The reader should be cautioned that
an arbitrary semigroup might have no nontrivial sequences satisfying uniqueness of finite
sums, in which case $\Sigma=\emptyset$.  However, if $S$ is cancellative, then by
\cite[Lemma 6.31]{HS}, any infinite subset of $S$ contains a sequence satisfying uniqueness of
finite products.

\begin{theorem}\label{FSBPR} Let $(S,+)$ be a semigroup. The collection $\Sigma$ of all 
finite FS-big subsets of $S$ is partition regular. \end{theorem}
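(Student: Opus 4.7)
The plan is to combine the finite Finite Unions Theorem (Theorem~\ref{FFU}) with a standard pigeonhole argument. Suppose $A\in\Sigma$ and $A=\bigcup_{i=1}^{r}C_i$. The key observation is that being $k$-summable is monotone decreasing in $k$: any initial segment of a witness for $k'$-summability is a witness for $k$-summability when $k\le k'$. Hence, since $r$ is finite, it will suffice to show that for \emph{each} $k\in\ben$ there is some index $i$ (possibly depending on $k$) such that $C_i$ is $k$-summable; a pigeonhole on the infinitely many values of $k$ will then produce a single $i^{*}$ for which $C_{i^{*}}$ is $k$-summable for every $k$, i.e.\ $C_{i^{*}}\in\Sigma$.

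Fix $k\in\ben$ and apply Theorem~\ref{FFU} with parameters $r$ and $k$ to obtain $m=m(r,k)$. Since $A$ is finite FS-big, $A$ is $m$-summable, so choose a sequence $\langle x_t\rangle_{t=1}^{m}$ in $S$ satisfying uniqueness of finite sums with $FS(\langle x_t\rangle_{t=1}^{m})\subseteq A$. Define a coloring
\[
\chi:\Fin(\{1,2,\ldots,m\})\to\{1,2,\ldots,r\},\qquad \chi(F)=i\ \text{where}\ \sum_{t\in F}x_t\in C_i,
\]
with sums taken in increasing order of the index $t$ (so the definition makes sense even if $(S,+)$ is non-commutative). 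Uniqueness of finite sums guarantees that $\chi$ is well-defined on all of $\Fin(\{1,\ldots,m\})$ and that distinct $F$'s yield distinct elements of $A$. By Theorem~\ref{FFU} applied to $\chi$, there are some $i\in\{1,\ldots,r\}$ and sets $F_1,\ldots,F_k\in\Fin(\{1,\ldots,m\})$ with $\max F_t<\min F_{t+1}$ for every $t<k$ such that $\chi\bigl(\bigcup_{t\in H}F_t\bigr)=i$ for every $H\in\Fin(\{1,\ldots,k\})$.

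Now set $y_t=\sum_{s\in F_t}x_s$ for $t=1,\ldots,k$. Because the $F_t$ are pairwise disjoint and correctly ordered, for every $H\in\Fin(\{1,\ldots,k\})$ we have $\sum_{t\in H}y_t=\sum_{s\in\bigcup_{t\in H}F_t}x_s\in C_i$, so $FS(\langle y_t\rangle_{t=1}^{k})\subseteq C_i$. Uniqueness of finite sums for $\langle y_t\rangle_{t=1}^{k}$ is inherited from $\langle x_t\rangle_{t=1}^{m}$: if $\sum_{t\in H}y_t=\sum_{t\in H'}y_t$ then $\sum_{s\in\bigcup_{t\in H}F_t}x_s=\sum_{s\in\bigcup_{t\in H'}F_t}x_s$, forcing $\bigcup_{t\in H}F_t=\bigcup_{t\in H'}F_t$ and hence $H=H'$ by disjointness and nonemptiness of the $F_t$. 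Thus $C_i$ is $k$-summable, completing the step and the argument.

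There is no deep obstacle; the only point that requires attention is verifying that the regrouped sequence $\langle y_t\rangle$ inherits uniqueness of finite sums, which is immediate from disjointness of the $F_t$ together with uniqueness of finite sums for $\langle x_t\rangle$. The non-commutative setting of Theorem~\ref{FSBPR} causes no trouble provided we consistently fix the convention that $\sum_{t\in F}x_t$ denotes the sum taken in increasing order of indices, which is exactly what makes the regrouping identity $\sum_{t\in H}y_t=\sum_{s\in\bigcup_{t\in H}F_t}x_s$ hold. (If $\Sigma=\emptyset$ for the given semigroup, the statement is vacuous.)
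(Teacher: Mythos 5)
Your proof is correct and follows essentially the same route as the paper: apply Theorem~\ref{FFU} to the coloring of $\Fin(\{1,\ldots,m\})$ induced by where the sums $\sum_{t\in F}x_t$ land, regroup along the block-ordered sets $F_t$ to get the new sequence $\langle y_t\rangle_{t=1}^k$, check that uniqueness of finite sums is inherited, and finish with the pigeonhole over $k$. You also correctly identify the one subtlety the paper flags after its proof, namely that in a non-commutative semigroup one needs $\max F_t<\min F_{t+1}$ (not mere disjointness) for the regrouping identity to hold.
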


\begin{proof} Suppose  $A\subseteq S$ is finite FS-big and   
$A=\bigcup_{i=1}^rB_i$ for some $r\in \nats.$
Let $k\in\nats$.  We shall show that there are some $i\in\{1,2,\ldots,r\}$
and some $\langle x_t\rangle_{t=1}^k$ satisfying uniqueness of finite
sums such that $FS(\langle x_t\rangle_{t=1}^k)\subseteq B_i$.  
By the pigeon hole principle, there is thus one $i$ which contains
such a set for arbitrarily large $k$, and thus for all $k$.

By Theorem \ref{FFU} pick $m\in\ben$ such that whenever 
$\Fin(\{1,2,\ldots,m\})=\bigcup_{i=1}^r{\mathcal F}_i$, then there exist $i\in\{1,2,\ldots,r\}$ and 
a sequence $\langle F_t\rangle_{t=1}^k$ in $\Fin(\{1,2,\ldots,m\})$ such that for each
$t\in\{1,2,\ldots,k-1\}$, if any, $\max F_t<\min F_{t+1}$ and for each $H\in\Fin(\{1,2,\ldots,k\})$, 
$\bigcup_{t\in H}\,F_t\in{\mathcal F}_i$.  Since $A$ is finite FS-big we may pick 
$\langle y_t\rangle_{t=1}^m$ satisfying uniqueness of finite sums
with $FS(\langle y_t\rangle_{t=1}^m)\subseteq A$.  For each
$i\in\{1,2,\ldots,r\}$ let ${\mathcal F}_i=\{H\in\Fin(\{1,2,\ldots,m\})\,|\,
\sum_{t\in H}\,y_t\in B_i\}$.  Pick $i\in\{1,2,\ldots,r\}$ and 
a sequence $\langle F_t\rangle_{t=1}^k$ in $\Fin(\{1,2,\ldots,m\})$ such that for each
$t\in\{1,2,\ldots,k-1\}$, if any, $\max F_t<\min F_{t+1}$ and for each $H\in\Fin(\{1,2,\ldots,k\})$, 
$\bigcup_{t\in H}\,F_t\in{\mathcal F}_i$.  For $n\in\{1,2,\ldots,k\}$ let
$x_n=\sum_{t\in F_n}y_t$. 
Then since $\max F_t<\min F_{t+1}$ when $t<k$, if
$H\in\Fin(\{1,2,\ldots,k\})$ and
$K=\bigcup_{n\in H}F_n$, then $\sum_{n\in H}x_n=\sum_{t\in K}y_t\in B_i$.
Further it is an easy exercise to show that, since 
$\langle y_t\rangle_{t=1}^m$ satisfies uniqueness of finite sums,
so does $\langle x_t\rangle_{t=1}^k$.
\end{proof}

Notice that, since we did the above proof for an arbitrary semigroup, it would
not be good enough to have $F_t\cap F_l=\emptyset$ when $t\neq l$.  For example,
if $F_1=\{1,3\}$, $F_2=\{2\}$, $H=\{1,2\}$, and $K=\bigcup_{n\in H}F_n$, then $K=\{1,2,3\}$. Thus
$\sum_{n\in H}x_n=y_1+y_3+y_2$ which need not equal $y_1+y_2+y_3=\sum_{t\in K}y_t$.

As we remarked earlier, the finite Finite Sums Theorem, (sometimes called Folkman's Theorem),
has been known, or at least easily knowable, since the proof of Rado's Theorem \cite{R} was
published in 1933.  

\begin{theorem}[Finite Finite Sums Theorem]\label{FFS} Let $k,r\in\ben$.  There exists
$m\in\ben$ such that whenever $\{1,2,\ldots,m\}=\bigcup_{i=1}^r B_i$, there
exist $i\in\{1,2,\ldots,r\}$ and a sequence $\langle x_t\rangle_{t=1}^k$ such that
$FS(\langle x_t\rangle_{t=1}^k)\subseteq B_i$.\end{theorem}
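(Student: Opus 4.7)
The plan is a compactness argument directly analogous to the proof of Theorem \ref{FFU}, with the infinite Finite Sums Theorem (Hindman's theorem) taking the role that Theorem \ref{IFU} played there. Fix $k, r \in \ben$ and suppose for contradiction that no such $m$ exists. For each $m \in \ben$, I would pick a coloring $\psi_m : \{1, 2, \ldots, m\} \to \{1, 2, \ldots, r\}$ such that no color class contains $FS(\langle x_t \rangle_{t=1}^k)$ for any $k$-term sequence, and extend it to $\sigma_m : \ben \to \{1, 2, \ldots, r\}$ by setting $\sigma_m(n) = 1$ for $n > m$.

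Next I would place the sequence $\langle \sigma_m \rangle_{m=1}^\infty$ in the compact product space $X = \{1, 2, \ldots, r\}^{\ben}$, with $\{1, 2, \ldots, r\}$ discrete, and extract a cluster point $\varphi \in X$. Reading $\varphi$ as an $r$-coloring of $\ben$ and applying the infinite Finite Sums Theorem yields some $i \in \{1, 2, \ldots, r\}$ and an infinite sequence $\langle y_t \rangle_{t=1}^\infty$ with $FS(\langle y_t \rangle_{t=1}^\infty) \subseteq \varphi^{-1}(\{i\})$. In particular $FS(\langle y_t \rangle_{t=1}^k)$ is monochromatic of color $i$ under $\varphi$.

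To finish, I would set $M = \sum_{t=1}^k y_t$ and use the basic open neighborhood $U$ of $\varphi$ consisting of those $\mu \in X$ that agree with $\varphi$ on the finite set $FS(\langle y_t \rangle_{t=1}^k)$. Because $\varphi$ is a cluster point, there is some $m \geq M$ with $\sigma_m \in U$. For this $m$, all sums $\sum_{t \in H} y_t$ with $\emp \neq H \subseteq \{1, 2, \ldots, k\}$ lie in $\{1, 2, \ldots, m\}$, so $\psi_m$ and $\varphi$ agree on them, making $\langle y_t \rangle_{t=1}^k$ a $k$-term sequence whose $FS$-set is monochromatic under $\psi_m$, contradicting the defining property of $\psi_m$.

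The only real obstacle is one of attribution rather than mathematics: the argument invokes the infinite Finite Sums Theorem, a substantial result in its own right. However, since the paper already relies on it in the proof of Theorem \ref{IFU}, importing it here introduces no new dependency. A more self-contained route would derive Theorem \ref{FFS} directly from Rado's theorem, as the authors mention is classical, but the compactness argument above is shorter and fits the style used earlier in the section.
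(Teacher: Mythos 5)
Your proof is correct, but it is not the paper's proof. The paper disposes of Theorem~\ref{FFS} with a citation: it is ``an easy consequence of Rado's Theorem'' \cite{R}, with the derivation left to \cite[Exercise 15.3.1]{HS}; no compactness and no infinite theorem is involved. What you do instead is transplant the compactness argument that the paper uses to derive Theorem~\ref{FFU} from Theorem~\ref{IFU}, replacing finite unions by finite sums: the bad colorings $\psi_m$, the extensions $\sigma_m$, the cluster point $\varphi\in\{1,2,\ldots,r\}^{\ben}$, and an appeal to the infinite Finite Sums Theorem \cite{H} applied to $\varphi$. The details check out: taking $m\geq M=\sum_{t=1}^k y_t$ guarantees $FS(\langle y_t\rangle_{t=1}^k)\subseteq\{1,2,\ldots,m\}$, so $\sigma_m$ and $\psi_m$ agree on that finite set, and the contradiction with the defining property of $\psi_m$ goes through. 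The trade-off is this: your route is uniform with the section's style and, as you note, adds no dependency beyond what the first proof of Theorem~\ref{FFU} already uses; but it makes Theorem~\ref{FFS} logically dependent on Hindman's infinite theorem, whereas the paper's route keeps it a finitary consequence of Rado's 1933 theorem. That distinction matters for the architecture of Section~2: the whole point of Leader's argument (the second proof of Theorem~\ref{FFU}, via Lemma~\ref{krs}) is to obtain Theorem~\ref{FFU} from Theorem~\ref{FFS} and Ramsey's Theorem \cite{Ram} alone, without invoking the infinite Finite Sums Theorem. If Theorem~\ref{FFS} is itself proved by compactness from the infinite theorem, that derivation loses its purpose, since the infinite theorem re-enters through the back door. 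So your argument is sound mathematics, but within this paper it would undercut exactly the independence that the second half of the section is designed to exhibit.
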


\begin{proof} This is an easy consequence of Rado's Theorem.  See
\cite[Exercise 15.3.1]{HS}.\end{proof}

While it is immediate that Theorem \ref{FFU} implies Theorem \ref{FFS} (by means
of the binary support of integers), it is by no means obvious that one
can derive Theorem \ref{FFU} from Theorem \ref{FFS}.  We are grateful to 
Imre Leader for providing an argument which establishes Theorem \ref{FFU}
using only Theorem \ref{FFS} and Ramsey's Theorem \cite{Ram}.

\begin{lemma}\label{krs} Let $k,r,s\in\ben$ with $k\leq s$.  There 
exists $m\in\ben$ such that whenever $A$ is a set with $\#A=m$, and
$\Fin(A)=\bigcup_{i=1}^r{\mathcal F}_i$, there exist $\varphi:\{1,2,\ldots,k\}
\to \{1,2,\ldots,r\}$ and $B\subseteq A$ with $\#B=s$ such that for
all $C\in \Fin(B)$, if $t=\#C$ and $t\leq k$, then 
$C\in{\mathcal F}_{\varphi(t)}$.\end{lemma}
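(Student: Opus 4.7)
The plan is to prove this lemma by iterated application of Ramsey's Theorem, once for each subset-size $t \in \{1,2,\ldots,k\}$. The statement is really just the finite Ramsey theorem for $t$-element subsets carried out simultaneously for all $t \leq k$: we want a subset $B$ which is monochromatic separately on each ``level'' of subset size, with the color on level $t$ being $\varphi(t)$.

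First I would fix notation for the standard Ramsey numbers. For $r,t,n \in \ben$, let $R_r^{(t)}(n)$ denote the least integer $N$ such that every $r$-coloring of the $t$-element subsets of an $N$-element set contains a monochromatic subset of size $n$ (this exists by Ramsey's Theorem). Then I would define a decreasing-in-reverse chain of integers $n_k,n_{k-1},\ldots,n_0$ by setting $n_k = s$ and, for $i = k, k-1, \ldots, 1$, $n_{i-1} = R_r^{(i)}(n_i)$. Finally set $m = n_0$; note that the hypothesis $k \leq s$ is only used to ensure the conclusion makes sense (we can speak of $t$-subsets of $B$ for $t \leq k$).

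Now given a set $A$ with $\#A = m$ and a partition $\Fin(A) = \bigcup_{i=1}^r {\mathcal F}_i$, I would construct a nested sequence $A \supseteq A_1 \supseteq A_2 \supseteq \cdots \supseteq A_k$ together with values $\varphi(1),\ldots,\varphi(k) \in \{1,\ldots,r\}$ as follows. Having $A_{t-1}$ of size at least $n_{t-1}$, view the partition restricted to $t$-element subsets of $A_{t-1}$ as an $r$-coloring of those $t$-subsets; by choice of $n_{t-1} = R_r^{(t)}(n_t)$, there is a subset $A_t \subseteq A_{t-1}$ with $\#A_t \geq n_t$ such that every $t$-element subset of $A_t$ lies in the same cell ${\mathcal F}_{\varphi(t)}$. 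At the end we have $\#A_k \geq n_k = s$; take $B$ to be any $s$-element subset of $A_k$. Then for any $C \in \Fin(B)$ with $t = \#C \leq k$, we have $C \subseteq B \subseteq A_k \subseteq A_t$, so $C \in {\mathcal F}_{\varphi(t)}$, as required.

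There is no real obstacle here beyond bookkeeping; the lemma is a straightforward ``stack'' of Ramsey's Theorem once one thinks to color each level separately. The only thing to watch is that at each stage the coloring on $t$-subsets is the one inherited from the original partition of $\Fin(A)$, and that the chain of inclusions is preserved so the color $\varphi(t)$ assigned at stage $t$ still governs $t$-subsets of all later (smaller) sets $A_{t+1},\ldots,A_k$. This is automatic from monotonicity of the chain and is precisely why iterating in the order $t = 1, 2, \ldots, k$ works.
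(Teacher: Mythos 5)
Your proof is correct and is essentially the paper's own argument in unrolled form: the paper proceeds by induction on $k$ (pigeonhole for $k=1$, then one application of Ramsey's Theorem for $(k+1)$-element subsets per inductive step), which when unwound is exactly your nested chain $A\supseteq A_1\supseteq\cdots\supseteq A_k$ with the composed Ramsey numbers $m=R_r^{(1)}\bigl(R_r^{(2)}(\cdots R_r^{(k)}(s)\cdots)\bigr)$. The only differences are presentational (explicit iteration versus induction, and the trivial remark that if the cells ${\mathcal F}_i$ overlap one fixes a choice of color for each finite set before applying Ramsey), so nothing needs to change.
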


\begin{proof}  We proceed by induction on $k$ (for all $r$ and all $s\geq k$).
For $k=1$ the conclusion is an immediate consequence of the pigeon hole principle.

Now assume that the lemma holds for $k$.  Let $r,s\in\ben$ be given with
$s\geq k+1$.  By Ramsey's Theorem pick $n\in\ben$ such that if
$\#D=n$ and $\{C\subseteq D\,|\,\#C=k+1\}\subseteq\bigcup_{i=1}^r{\mathcal F}_i$,
then there exist $i\in\{1,2,\ldots,r\}$ and $B\subseteq D$ such that $\#B=s$
and $\{C\subseteq B\,|\,\#C=k+1\}\subseteq {\mathcal F}_i$.

Pick $m$ as guaranteed by the induction hypothesis for $k$, $r$, and $n$ (with $n$
replacing $s$) and let $\#A=m$.  Assume that $\Fin(A)=\bigcup_{i=1}^r{\mathcal F}_i$.
Pick $\varphi:\{1,2,\ldots,k\}
\to \{1,2,\ldots,r\}$ and $D\subseteq A$ with $\#D=n$ such that for
all $C\in \Fin(D)$, if $t=\#C$ and $t\leq k$, then 
$C\in{\mathcal F}_{\varphi(t)}$. Then
$\{C\subseteq D\,|\,\#C=k+1\}\subseteq\bigcup_{i=1}^r{\mathcal F}_i$,
so pick  $\varphi(k+1)\in\{1,2,\ldots,r\}$ and $B\subseteq D$ such that $\#B=s$
and $\{C\subseteq B\,|\,\#C=k+1\}\subseteq {\mathcal F}_\varphi(k+1)$.\end{proof}

{\it Second proof of Theorem\/} \ref{FFU}  Let $k,r\in\ben$ and pick
by Theorem \ref{FFS} $s\in\ben$ such that whenever $\{1,2,\ldots,s\}=\bigcup_{i=1}^r C_i$,
there exist $x_1,x_2,\ldots,x_k$ and $i\in\{1,2,\ldots,r\}$ such that
$FS(\langle x_t\rangle_{t=1}^k)\subseteq C_i$.
Let $k'=s$ and pick $m$ as guaranteed by Lemma \ref{krs} for $r$, $k'$, and $s$.
Let $\Fin(\{1,2,\ldots,m\})=\bigcup_{i=1}^r{\mathcal F}_i$.
Pick $\varphi:\{1,2,\ldots,s\}
\to \{1,2,\ldots,r\}$ and $B\subseteq \{1,2,\ldots,m\}$ with $\#B=s$ such that for
all $C\in \Fin(B)$, if $t=\#C$ and $t\leq s$, then 
$C\in{\mathcal F}_{\varphi(t)}$.  Pick $x_1,x_2,\ldots,x_k$ and $i\in\{1,2,\ldots,r\}$ such that
$\varphi[FS(\langle x_t\rangle_{t=1}^k)]=\{i\}$.  Pick
$\langle F_t\rangle_{t=1}^k$ with $\max F_t<\min F_{t+1}$ for all $t<k$
such that $\#F_t=x_t$, which one can do since $\sum_{t=1}^kx_t\leq s$.\qed

\section{Sets defined by the Thue-Morse word}

In this section we define a class of  subsets of $\nats$ defined by
the occurrences of factors in the Thue-Morse word.

\begin{theorem}\label{main}  Let $u$ be a factor of the Thue-Morse word $\TM =011010011001011010\ldots $.
Then
\begin{enumerate}

\item If $u$ is a prefix of $\TM$ then  $\TM\big|_u$ is an IP-set.

\item If $u$ is a prefix of $\overline{\TM}$ then $\TM\big|_u$ is infinite FS-big but is not an IP-set.

\item If $u$ is neither a prefix of $\TM$ nor a prefix of $\overline{\TM}$  then
$\TM \big|_u$ is not $3$-summable. Moreover $\TM \big|_u$ is $2$-summable if and only 
$u$ is a prefix of $\tau^n(010)$ or of $\tau^n(101)$ for some $n\geq 0$.

\end{enumerate}
\end{theorem}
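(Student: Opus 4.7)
The plan exploits throughout the identity $t_{a+b} = t_a + t_b \pmod 2$ for $a, b$ with disjoint binary supports, together with the morphic structure $\TM = \tau(\TM)$. Set $m = |u|$ and $B = \lceil \log_2 m \rceil$.

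For part~(1), take $x_k = 2^{B+2k} + 2^{B+2k+1}$. Each $x_k$ has Hamming weight~$2$, the supports are pairwise disjoint and lie above bit~$B$, so for any non-empty finite $F$ and any $0 \le j < m \le 2^B$, the addition $\sum_{k \in F}x_k + j$ produces no carries, giving $t_{\sum x_k + j} = 0 + t_j = u_j$. Hence $\sum x_k \in \TM|_u$, with uniqueness of finite sums from the disjointness, so $\TM|_u$ is an IP-set. For part~(2), $u_j = 1 - t_j$. For the ``not IP'' half it suffices to show $\TM|_1$ is not IP: given $FS(\langle y_n\rangle) \subseteq \TM|_1$, set $z_1 = y_1$ and colour the sub-IP $FS(\langle y_n\rangle_{n \ge 2})$ by residues mod $2^M$ with $2^M > z_1$. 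Hindman's theorem yields a monochromatic sub-IP, which must correspond to residue $0$ (otherwise two elements of residue $r\neq 0$ would sum to residue $2r \neq r$). Any $z_2 \ne 0$ in this sub-IP has binary support disjoint from $z_1$'s, whence $t_{z_1+z_2} = 1+1 = 0$, contradicting $z_1+z_2 \in \TM|_1$.

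For the ``infinite FS-big'' half of part~(2), the core computation is
\[ m(2^N - 1) = (m-1)\cdot 2^N + (2^N - m) \qquad (1 \le m \le 2^N), \]
which yields $s_2\bigl(m(2^N-1)\bigr) = s_2(m-1) + \bigl(N - s_2(m-1)\bigr) = N$, where $s_2$ denotes the binary digit sum. Fix $k$, choose $N$ odd with $2^N \ge k$, set $a = 2^N - 1$, and define $x_n = a + 2^{c_n} + 2^{c_n+1}$ with exponents $c_n$ growing fast enough (each $c_n$ above the bit-length of $|F|a$ for any relevant $F$) to guarantee uniqueness of all finite sums. For $|F| \le k$, $\sum_F x_n = |F|a + \sum_F(2^{c_n} + 2^{c_n+1})$ splits as a disjoint union of a low part of weight $N$ and a high part of weight $2|F|$, giving odd total weight $N + 2|F|$; hence $\sum x_n \in \TM|_1$ and $\TM|_1$ is infinite FS-big. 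For a general prefix $u$ of $\overline{\TM}$, choose $n$ with $2^n \ge m$; then for $q \in \TM|_1$ and $j < m$ one has $t_{2^n q + j} = t_q + t_j = 1 + t_j = u_j$, so $2^n \cdot \TM|_1 \subseteq \TM|_u$, and scaling the above $x$-sequence by $2^n$ transfers the $k^\infty$-summability.

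For part~(3) we apply $\tau$-desubstitution: splitting the occurrences of $u$ in $\TM$ by the parity of the starting position and using $\TM = \tau(\TM)$ one obtains, for example, $\TM|_{010} = 2\cdot\TM|_{00} \cup (2\cdot\TM|_{11} + 1)$. The formula $t_{p+1} - t_p \equiv 1 + o(p) \pmod 2$, where $o(p)$ is the number of trailing $1$'s of $p$, shows that $t_p = t_{p+1}$ forces $o(p)$ odd, hence $p$ odd; consequently $\TM|_{00}$ and $\TM|_{11}$ consist entirely of \emph{odd} integers. Thus: if $u$ begins with $00$ or $11$, then $\TM|_u$ is all odd and pair sums (even) escape it -- already $2$-summability fails; if $u$ begins with $010$ (by symmetry also $101$), enumerating the four parity patterns of $(x_1, x_2, x_3)$ against the decomposition above always forces some required pair or triple sum to be an even element of $\TM|_{00}$ or $\TM|_{11}$, a contradiction -- so $\TM|_u$ is not $3$-summable. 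For the $2$-summability characterisation, the ``if'' direction exhibits the explicit witness $\{3, 15\}$ for $\TM|_{010}$ (since $3 = 2\cdot 1 + 1$, $15 = 2\cdot 7 + 1$ with $1, 7 \in \TM|_{11}$, and $3+15 = 18 = 2\cdot 9$ with $9 \in \TM|_{00}$); by symmetry one handles $\TM|_{101}$, and scaling via $2^n \cdot \TM|_{010} \subseteq \TM|_{\tau^n(010)}$ covers every prefix of $\tau^n(010)$ or $\tau^n(101)$. For the ``only if'' direction, a second desubstitution gives $\TM|_{0100} = 2\cdot\TM|_{110} + 1$ (all odd) and $\TM|_{0101} = 2\cdot\TM|_{00}$ (since $\TM|_{111} = \emptyset$ by cube-freeness kills the odd part); in either case pair sums escape $\TM|_u$. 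Iterating this desubstitution, and noting that the handful of straddling occurrences allowed by the overlap-freeness of $\TM$ are negligible, shows that the $2$-summable cases are exactly the prefixes of $\tau^n(010)$ or $\tau^n(101)$.

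The main obstacle is the ``only if'' direction in part~(3): one must induct on $|u|$ via $\tau$-desubstitution and verify at each stage that any departure from the prescribed $\tau^n(010)$ or $\tau^n(101)$ pattern collapses $\TM|_u$ to a single-parity subset whose pair sums escape. Managing the case distinctions while keeping the few straddling occurrences under control is where the bookkeeping is heaviest.
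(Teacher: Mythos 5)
Parts (1) and (2) of your proposal are sound. Part (1) is the paper's own argument. Your infinite FS-big construction in part (2) --- a block $2^N-1$ of odd length, two high bits per term, a scaling by a power of $2$, and the digit-sum identity $\#{\rm supp}\bigl(m(2^N-1)\bigr)=N$ --- is essentially the paper's construction, the identity being its Lemma~\ref{old}. Your ``not an IP-set'' argument, however, is genuinely different and correct: the paper partitions $\TM\big|_1$ by the parity of $\min\bigl({\rm supp}(n)\bigr)$, shows neither cell is $2^\infty$-summable (Lemma~\ref{TM1}), and then invokes partition regularity of IP-sets; you instead colour a sub-IP by residues modulo $2^M$, extract via the Finite Sums Theorem a monochromatic sub-IP which must consist of multiples of $2^M$, and add two terms with disjoint binary supports. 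Your route is shorter; the paper's detour buys the stronger Lemma~\ref{TM1}, which it reuses to prove Corollary~\ref{notpartreg}, but for the theorem itself your argument suffices.

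Part (3) contains a genuine gap. Your explicit non-$3$-summability arguments cover only factors beginning with $00$, $11$, $010$ or $101$. But ``neither a prefix of $\TM$ nor of $\overline{\TM}$'' also admits factors beginning with $0110$ or $1001$, e.g.\ $u=011001=\tau(010)$ or $u=0110011$, and among these sit precisely the $2$-summable-but-not-$3$-summable cases of length $\geq 4$ (the prefixes of $\tau^n(aba)$ with $n\geq 1$, which begin with $0110$ or $1001$, not with $010$ or $101$). For such $u$ your proposal establishes $2$-summability (scaled witnesses) but never non-$3$-summability: your closing induction is formulated only as ``any departure from the $\tau^n(010)/\tau^n(101)$ pattern collapses $\TM\big|_u$ to a single-parity subset whose pair sums escape,'' which by design says nothing about the factors that \emph{do} conform to the pattern. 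What is missing --- and is the bulk of the paper's proof, packaged as Lemma~\ref{thm:lemmaTM} --- is an induction producing exact identities $\TM\big|_u=2\cdot\TM\big|_{u'}$ (after appending a forced letter when needed), a check that $u'$ is again neither a prefix of $\TM$ nor of $\overline{\TM}$, and the observation that non-$3$-summability pulls back through multiplication by $2$; equivalently, the paper's trichotomy that every element of $\TM\big|_u$ ends in $10^k$ (so not even $2$-summable), or every element ends in $110^k$ or $10^{k+1}$ with both occurring and $u$ a prefix of some $\tau^n(aba)$ (so not $3$-summable by the carry analysis).

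A second, related problem is your appeal to ``the handful of straddling occurrences \ldots are negligible.'' That is the wrong fact, and it could not save the argument even if meant literally. What is true, and what the induction requires, is that every factor of $\TM$ of length at least $4$ occurs at positions of only \emph{one} parity, with no exceptions whatsoever. Finitely many exceptional occurrences could not be discarded: your parity and mod-$4$ obstructions need \emph{every} element of $\TM\big|_u$ to have the stated residue, since a single stray element of the wrong form is enough to complete a $2$- or $3$-summability witness.
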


Before we begin with the proof of Theorem~\ref{main} we introduce some useful notation: 
For each positive integer
$n$ we will denote the binary expansion of $n$ by $[n]_2$, i.e., if 
$n=r_k2^k + r_{k-1}2^{k-1} +\ldots +r_02^0$ with
$r_k=1$ and $r_i\in\{0,1\}$ we write $[n]_2=r_kr_{k-1}\ldots r_0$. We define the {\it support} of 
$n$, denote ${\rm supp}(n)$ by ${\rm supp}(n)=\{i\in \{0,1,\ldots ,k\}\,|\, r_i=1\}$.
For instance, ${\rm supp}(19)=\{0,1,4\}$.
Thus
\[t_n=0 \Leftrightarrow \#{\rm supp}(n) \,\mbox{is even}.\]
Finally, for each length $n$ we denote by $\textrm{pref}_{n}\TM$ the prefix of $\TM$ of length $n$.

\begin{proof} [Proof of Theorem \ref{main}, part 1.] It follows from the definition of the Thue-Morse word $\TM$ that
 if $u=u_1u_2\ldots u_k \in \{0,1\}^k$ is a 
factor of $\TM,$ then $m\in \TM\big|_u$ if and only if 
 \[\#{\rm supp}(m+j)\equiv u_{j+1}\bmod 2\]
 for each $0\leq j\leq |u|-1.$ 
 Thus, if $2^n>m +|u|-1$ then  $\#{\rm supp}(2^{n+1}+2^n+m+j)= 2 + \#{\rm supp}(m+j)$ for each $0\leq j\leq |u|-1$ from which it follows that  $2^{n+1}+2^n+m\in \TM\big|_u.$
 Hence if $0\in \TM\big|_u$ (equivalently if $u$ is a prefix of $\TM),$ then there is a sequence of positive integers of the form $2^n+2^{n+1}$ whose finite sums are all in $\TM\big|_u.$
Thus, $\TM\big|_u$ is an IP-set. This completes the proof of 1.

We will need the following lemma in the proof of 2.:

\begin{lemma}\label{old} Let $i,j,k$ and $r$ be positive integers with 
$r$ odd and $j\leq k-2$. If $[r]_2=r_jr_{j-1}\ldots r_0$  then
\[\#{\rm supp}\big(r2^i(2^k-1)\big)=k.\]

\end{lemma}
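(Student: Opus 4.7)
The plan is to reduce to the case $i=0$ and then to count bits in $r(2^k-1)$ via a clean decomposition. First, since multiplication by $2^i$ is simply a left shift in binary, we have $\#{\rm supp}(r\cdot 2^i (2^k-1)) = \#{\rm supp}(r(2^k-1))$, so it suffices to prove that $\#{\rm supp}(r(2^k-1)) = k$.

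The key identity I would use is
\[
r(2^k-1) = (r-1)2^k + (2^k - r).
\]
My plan is to verify that the two summands have disjoint binary supports and then to count the ones in each separately. For the high summand: because $r$ is odd, $r-1$ is even, so ${\rm supp}(r-1) = {\rm supp}(r)\setminus\{0\}$, and consequently $(r-1)2^k$ has support contained in $\{k+1,k+2,\ldots\}$, contributing exactly $\#{\rm supp}(r)-1$ ones.

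For the low summand: the hypothesis $j\leq k-2$ gives $r<2^{k-1}$, so $2^k-r$ is positive and lies in the range $(2^{k-1},2^k)$, hence its support is contained in $\{0,1,\ldots,k-1\}$ and thus disjoint from that of the high summand. Rewriting $2^k-r=(2^k-1)-(r-1)$ and noting that ${\rm supp}(r-1)\subseteq\{1,\ldots,j\}\subseteq\{0,\ldots,k-1\}$, I recognize $2^k-r$ as the bitwise complement of $r-1$ in the block of $k$ low-order bits, so it contributes $k-(\#{\rm supp}(r)-1)$ ones. Adding the two contributions gives $k$, as required.

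There is no serious obstacle; the argument is essentially bookkeeping on binary digits. The only insight required is the choice of decomposition, which is dictated by the parity of $r$ (guaranteeing a zero bit at position $k$ that separates the two summands) and by the size hypothesis $j\leq k-2$ (guaranteeing that $2^k-r$ stays strictly below $2^k$ and its top bit lies at position $k-1$, keeping it inside the low block).
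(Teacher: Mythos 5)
Your proof is correct and is essentially the paper's own argument: the paper also reduces to $i=0$ and counts binary digits via the decomposition $r(2^k-1)=(r2^k-1)-(r-1)$, which is exactly your $(r-1)2^k+(2^k-r)$ regrouped, resting on the same two ingredients (oddness of $r$, so that ${\rm supp}(r-1)={\rm supp}(r)\setminus\{0\}$, and $j\leq k-2$, so that the high and low blocks of bits cannot interfere). The only cosmetic difference is that you organize the count as a carry-free addition of two numbers with disjoint supports, while the paper organizes it as a borrow-free subtraction from $[r-1]_2$ followed by $k$ ones; your write-up is, if anything, slightly cleaner.
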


\begin{proof} Since $\#{\rm supp}\big(r2^i(2^k-1)\big)=\#{\rm supp}\big(r(2^k-1)\big)$ 
it suffices to show that
$\#{\rm supp}\big(r(2^k-1)\big)=k$.

 We have that $[r2^k]_2=r_lr_{l-1}\ldots r_00^k$. Thus $[r2^k-1]_2=r_lr_{l-1}\ldots r_01^{k-1}$.

Since $l\leq k-2$ we have
\[\#{\rm supp}\big(r(2^k-1)\big)= \#{\rm supp}(r2^k-1 -r +1)=\#{\rm supp}(r) +k-1 -\#{\rm supp}(r) +1=k\]
where the last $+1$ term comes from the fact that $r_0=1$ since $r$ is odd.
\end{proof}

{\it Proof of Theorem \ref{main}, part 2.} We first note that those $n$'s for which
$\#{\rm supp}(n) $ is odd and $[n]_2$ ends in
 $0^{l}$ correspond to occurrences of
$\textrm{pref}_{2^{l}}\overline{\TM}$.  

Let $u$ be a prefix of $\overline{\TM}$ and $k$ a positive integer. To prove that $\TM_u\in
\Sigma_{2k-1}^\infty$ consider the sequence
$\langle x_n\rangle_{n=0}^{\infty}$ of numbers whose binary representation is given by
$$[x_n]_2= 110^{2n+j}1^{2k-1}0^{l} \,\mbox{where}\, j= \lceil\log_2
(2k-1)\rceil\,\mbox{and}\, l=\lceil \log_2 |u| \rceil\, .$$ Consider any $r\leq
2k-1$ distinct numbers $x_{n_i}$ and consider their sum

$$\sum_{i=1}^{r}x_{n_i}=\sum_{i=1}^{r}(2^{2n_i+2k-1+l+j}+2^{2n_i+2k+l+j})+
r(2^{2k-1}-1)2^{l}.$$

By Lemma~\ref{old} it follows that  $\#{\rm supp}(r(2^{2k-1}-1)2^{l})=2k-1$ and hence  
that $\#{\rm supp}(\sum_{i=1}^{r}x_{n_i})=2k-1+2r$. As this is an odd number,
and $[\sum_{i=1}^{r}x_{n_i}]_2$ ends in at least $l=\lceil\log_2{|u|}\rceil$ many $0$'s, it follows that
$\sum_{i=1}^{r}x_{n_i}$ is an occurrence of $u$.

Next we will prove that if $u$ is a prefix of $\overline{\TM}$ then
$\TM\big|_u$ is not an IP-set.
We will make use of the following lemma:

\begin{lemma}\label{TM1} There exists a partition of the set $\TM\big|_1$ into two sets 
neither of which is in $\Sigma_2^\infty$.
\end{lemma}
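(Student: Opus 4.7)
The plan is to partition $\TM\big|_1$ by the parity of the $2$-adic valuation. Writing $v(n) = \min {\rm supp}(n)$ for the position of the lowest $1$-bit in $[n]_2$, I will take
\[A = \{n \in \TM\big|_1 : v(n) \text{ is even}\}, \qquad B = \{n \in \TM\big|_1 : v(n) \text{ is odd}\},\]
so that $\TM\big|_1 = A \sqcup B$, and I will show that neither $A$ nor $B$ belongs to $\Sigma_2^\infty$. The two arguments are word-for-word identical after swapping ``even'' and ``odd'' throughout, so I describe only the one for $A$.

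The core step is to assume, for contradiction, that $\langle x_n\rangle_{n=1}^\infty$ witnesses $A\in\Sigma_2^\infty$, i.e.\ every $x_n$ and every $x_n+x_m$ (for $n\neq m$) lies in $A$. Since $v(x_n)$ is a non-negative even integer for each $n$, the infinite pigeonhole principle applied to the sequence $v(x_n)$ lets me pass to an infinite subsequence on which either (i) $v(x_n) = 2k$ is constant, or (ii) $v(x_n) = 2k_n$ with $k_1 < k_2 < \cdots$, and I derive a contradiction in each.

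In case (i) I will write $x_n = 2^{2k}(2a_n+1)$, so that $x_n+x_m = 2^{2k+1}(a_n+a_m+1)$ and hence $v(x_n+x_m) = 2k+1 + v(a_n+a_m+1)$. Membership of every $x_n+x_m$ in $A$ forces $v(a_n+a_m+1)$ to be odd and therefore positive, so $a_n+a_m$ must be odd for every pair. A second pigeonhole application --- this time on the parity of $a_n$ --- produces an infinite subsequence along which all $a_n$ share a common residue modulo $2$; along this subsequence the sums $a_n+a_m$ are uniformly even, which is the desired contradiction.

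In case (ii) I will write $x_n = 2^{2k_n}y_n$ with $y_n$ odd, so that $\#{\rm supp}(y_n) = \#{\rm supp}(x_n)$ is odd and ${\rm supp}(y_n)$ is finite. The key observation is that as soon as $m$ is large enough that $2(k_m-k_n) > \max {\rm supp}(y_n)$, the binary supports of $x_n$ and $x_m$ are disjoint, the addition $x_n+x_m$ involves no carries, and $\#{\rm supp}(x_n+x_m) = \#{\rm supp}(x_n) + \#{\rm supp}(x_m)$ is a sum of two odd integers --- hence even. This forces $x_n+x_m \notin \TM\big|_1$, contradicting the hypothesis that every pairwise sum lies in $A$. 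The main obstacle is engineering a single partition that simultaneously defeats \emph{both} natural kinds of potential witnesses --- those with fixed $v$ and those with $v \to \infty$ --- and it is precisely the parity of $v$ that accomplishes this, via the modular-arithmetic contradiction of case (i) and the disjoint-supports contradiction of case (ii).
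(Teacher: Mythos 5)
Your proposal is correct and takes essentially the same approach as the paper: the partition by parity of $\min\big({\rm supp}(n)\big)$ is identical, and your two cases reproduce the paper's two key steps --- disjoint binary supports forcing an even digit sum (your case (ii)), and equal valuation together with an equal bit in the next position forcing the sum's valuation to flip parity (your case (i), where pigeonholing on the parity of $a_n$ is exactly the paper's pigeonhole on the bit at position $r+1$). The only difference is organizational: you set up an explicit constant-versus-increasing dichotomy for the valuations, whereas the paper first uses the disjoint-support observation to bound the minimum support and then applies the pigeonhole principle twice.
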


\begin{proof}
Consider the partition $\TM\big|_1=A_0 \cup A_1$ defined as
follows: Let  $A_0$ be the set of all $n\in \TM\big|_1$ such that
the ${\rm min}\big({\rm supp}(n)\big)$  is even, and let $A_1$
be the set of all $n\in \TM\big|_1$ such that the ${\rm
min}\big({\rm supp}(n)\big)$  is odd. For instance,
$25=2^4+2^3+2^0$, and hence the least nonzero digit is in position
$0$, so $25\in A_0$. We will show that neither $A_i$ is in
$\Sigma_2^\infty$.  Fix $i\in \{0,1\}$ and suppose to the contrary
that $A_i$ is in $\Sigma_2^\infty$, i.e., there is an infinite 
sequence $\langle x_n\rangle_{n=1}^{\infty}$ in $A_i$ 
satisfying uniqueness of finite sums such that for every
$n\neq m$ we have $x_n+x_m\in A_i$. Note first that for each
$n>1$ we have  ${\rm supp}(x_n)\cap {\rm supp}(x_1)\neq
\emptyset$.  Otherwise  $\#{\rm supp}(x_1+x_n)$ would be even.
Therefore, there exists a positive constant $M$ such that ${\rm
min}\big({\rm supp}(x_n)\big)\leq M$ for each $n\in\ben$. By the
pigeon hole principle there exists a positive integer $r$ and an
infinite subsequence $x_{n_1},x_{n_2},\ldots $ of the sequence
$\langle x_n\rangle_{n=1}^{\infty}$ such that ${\rm min}\big({\rm
supp}(x_{n_j})\big)=r$ for each $j\in\ben$. Again by the pigeon
hole principle there exists  infinitely many of the $x_{n_j}$
whose binary expansions also agree in position $r+1$. Thus there
exists $n\neq m$ such that ${\rm min}\big({\rm
supp}(x_n)\big)={\rm min}\big({\rm supp}(x_m)\big)=r$ and
such that $r+1 \in {\rm supp}(x_n)$ if and only if $r+1 \in {\rm
supp}(x_m)$. It is readily verified that  ${\rm min}\big({\rm
supp}(x_n+x_m)\big)=r+1$. Hence $x_n+x_m\in A_{1-i}$.
\end{proof}

\textbf{Remark.} It is not difficult to see that the sets $A_0$
and $A_1$ from the proof of Lemma \ref {TM1} are both finite FS-big. So
they provide examples of sets which are finite FS-big but not
$2^\infty$-summable.\\

It follows from the above lemma that $\TM\big|_1$ is not an IP-set. In fact, the  property of being an
IP-set is partition regular, so for any finite partition of $\TM\big|_1$
one element of the partition must be an IP-set and in particular must be
in $\Sigma_2^\infty$. But this contradicts Lemma \ref {TM1}. 
Let $u$ be a prefix of $\overline{\TM}$.  Since $\TM\big|_u \subseteq \TM\big|_1$ it follows  
that $\TM\big|_u$  is not an IP-set.

\emph{Proof of Theorem \ref{main}, part 3.} We will make use of the following
lemma:

\begin{lemma}\label{thm:lemmaTM} Let $u$ be a factor of $\TM$ which is neither a prefix of 
$\TM$ nor a prefix of $\overline{\TM}$. Then there exists a nonnegative integer $k$ such that 
one of the two following properties holds:
\begin{enumerate}
\item\label{eqn:point1} For each $x\in \TM \big|_u$, $[x]_2$ ends in $10^{k}$.
\item\label{eqn:point2} For each $x\in \TM \big|_u$ either $[x]_2$ ends in $110^{k}$ or 
in $10^{k+1}$ and both cases happen. Furthermore, $u$ is a prefix of $\tau^n(aba)$ for some 
nonnegative integer $n$, with $a,b$ distinct letters.
\end{enumerate}
\end{lemma}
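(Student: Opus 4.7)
My plan is to prove the lemma by strong induction on $|u|$, exploiting the $\tau$-substitution structure of $\TM$.

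For the base cases $|u|\in\{2,3\}$ I verify directly. When $|u|=2$, the factors of $\TM$ that are neither a prefix of $\TM$ nor of $\overline{\TM}$ are $00$ and $11$; since $t_{2y}t_{2y+1}=\tau(t_y)\in\{01,10\}$ for every $y$, these occur only at odd positions, so case (1) holds with $k=0$. For $|u|=3$, the relevant factors are $001$, $010$, $101$, $110$: the middle two equal $\tau^0(010)$ and $\tau^0(101)$, and a direct check shows that both trailing patterns $11$ and $10$ appear in $\TM\big|_u$, giving case (2) with $k=0$; the other two begin with a repeated letter and again give case (1) with $k=0$.

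For the inductive step $|u|\ge 4$, the first task is to show that $u$ cannot have both even and odd occurrences. An even occurrence at $2y$ forces $u_{2i-1}\ne u_{2i}$ in each $\tau$-block covered by $u$, while an odd occurrence at $2y+1$ forces $u_{2i}\ne u_{2i+1}$. Combining these for $|u|\ge 4$ forces the prefix $u_1u_2u_3u_4$ to alternate, i.e., $u_1u_2u_3u_4\in\{0101,1010\}$, and the odd-occurrence desubstitution $u_1u_2u_3u_4=\overline{t_y}\,t_{y+1}\,\overline{t_{y+1}}\,t_{y+2}$ then forces $t_yt_{y+1}t_{y+2}\in\{000,111\}$, contradicting the cube-freeness of $\TM$. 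Hence every occurrence of $u$ has the same parity. If all occurrences are odd, then $[x]_2$ always ends in $1$ and case (1) holds with $k=0$.

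If all occurrences are even, then $u$ desubstitutes to a factor $v_e$ of length $\lceil|u|/2\rceil<|u|$ with $\TM\big|_u=2\cdot\TM\big|_{v_e}$ (a single-letter tail being absorbed into $v_e$ when $|u|$ is odd). Using that both $\TM$ and $\overline{\TM}$ are fixed points of $\tau$, prefixes lift cleanly: if $v_e$ were a prefix of $\TM$ (respectively $\overline{\TM}$) then $u$ would be too, contradicting the hypothesis. Hence the inductive hypothesis applies to $v_e$, and since $[x]_2=[y]_2\cdot 0$ for $x=2y$, case (1) with parameter $k'$ for $v_e$ translates to case (1) with $k=k'+1$ for $u$, while case (2) with parameter $k'$ and $v_e$ a prefix of $\tau^{k'}(aba)$ translates to case (2) with $k=k'+1$ and $u$ a prefix of $\tau^{k'+1}(aba)$.

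The main obstacle will be the combinatorial step ruling out mixed even/odd occurrences for $|u|\ge 4$, which requires combining the alignment constraints from both $\tau$-desubstitutions with the cube-freeness of $\TM$. A secondary delicate point is prefix lifting under $\tau$: showing that if $v_e$ is a prefix of $\TM$ or of $\overline{\TM}$ then $u$ must be too, which relies on the fixed-point identities $\tau(\TM)=\TM$ and $\tau(\overline{\TM})=\overline{\TM}$ and is needed to reduce cleanly to the inductive hypothesis.
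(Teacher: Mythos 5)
Your proof is correct, and although it runs on the same engine as the paper's --- induction on $|u|$ driven by $\tau$-desubstitution --- the organization is genuinely different and, in fact, cleaner. The paper first disposes of factors beginning $aa$, then inducts only over factors beginning $ab$ (base case $aba$), splitting the inductive step according to whether $u$ begins with $aba$ or with $0110/1001$; in the latter sub-case it desubstitutes the longest prefix of $u$ that is a prefix of $\TM$ or $\overline{\TM}$ to obtain the trailing-pattern dichotomy, and then, when both endings occur, performs a second desubstitution of $u$ itself (so the induction hypothesis is applied up to twice per step) to extract the $\tau^n(aba)$ structure; moreover, the parity fact --- every factor of length at least $4$ occurs only at even or only at odd positions --- is asserted there without proof (``it is not difficult to prove''). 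You make that parity fact the organizing principle and actually prove it: your alternation argument (even alignment forces $u_{2i-1}\neq u_{2i}$, odd alignment forces $u_{2i}\neq u_{2i+1}$, and the odd-occurrence desubstitution then forces $t_yt_{y+1}t_{y+2}\in\{000,111\}$, impossible by cube-freeness) is exactly the right one. After that, your induction collapses to a single dichotomy: all-odd occurrences give case (1) with $k=0$, and all-even occurrences give one desubstitution $\TM\big|_u=2\cdot\TM\big|_{v_e}$ and one application of the induction hypothesis that lifts the parameter $k$ and the $\tau^n(aba)$ structure simultaneously. You also make explicit the prefix-lifting step through $\tau(\TM)=\TM$ and $\tau(\overline{\TM})=\overline{\TM}$, which the paper needs but uses silently when it applies the induction hypothesis to its desubstituted word $u'$. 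The trade-off: the paper's route needs no base cases beyond $aba$, but pays with the initial-pattern case analysis and the doubled use of the hypothesis; yours pays with the length-$2,3$ base cases and the parity lemma, each of which is a short self-contained argument.

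Two small repairs. First, in your length-$3$ base case, case (2) demands not merely that both endings $11$ and $10$ occur among occurrences of $010$ (resp.\ $101$), but that \emph{every} occurrence ends in $11$ or $10$; that is not a finite check, though it follows in one line as in the paper's treatment of $aba$: if $[x]_2$ ended in $00$ or $01$, then $\#{\rm supp}(x)$ and $\#{\rm supp}(x+2)$ would have opposite parities, contradicting $t_x=t_{x+2}$. Second, when citing the induction hypothesis you write ``$v_e$ a prefix of $\tau^{k'}(aba)$'', conflating the trailing-zero parameter $k'$ with the $\tau$-power; the hypothesis supplies some power $n$ a priori unrelated to $k'$, but since your lift $n\mapsto n+1$ works for any $n$, this is purely notational.
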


\begin{proof}  Let $u$ be a factor of $\TM$ which is neither a prefix of $\TM$ nor a prefix 
of $\overline{\TM}$ and let  $\{a, b\} =  \{0,1\}$.
If $x$ is an occurrence of $aa$, then the $\#{\rm supp}(x)$ and $\#{\rm supp}(x+1)$ 
have the same parity, and hence $[x]_2$ ends in $1$ and the statement is verified for all 
factors beginning with $aa$.

We can then assume that $u$ begins with $ab$ and we will proceed by induction on $|u|$. Clearly the 
shortest such $u$ is of the kind $aba$. Then for each $x\in \TM_u,$ the number of $1$'s in the 
binary expansion of $x$ and of $x+2$ have the same parity. It follows that $[x]_2$ must 
end in $10$ or $11$ (and in fact it is easily verified that both are possible). Thus the result 
of the lemma is verified with  $k=0$.

Next suppose $|u|=N\geq 4$ and that the claim is true for all
factors $u$ of length smaller than $N$. If $u$ begins in $aba$,
then $\TM\big|_u\subseteq \TM\big|_{aba}$ and hence, as we have just
seen if $x\in \TM \big|_u$ we have that $[x]_2$ must end in either
$11$ or $10$. Otherwise $u$ must begin in either $0110$ or $1001$.
In this case, let $v$ denote the longest prefix of $u$ which is
either a prefix of $\TM$ or of $\overline{\TM}$. Then we can write
$u=va\lambda$ where $v$ begins in either $0110$ or $1001,$  $a\in
\{0,1\}$ and $v\in \{0,1\}^*$.  Since both $v0$ and $v1$ are
factors of $\TM$ it follows that $v=\tau(v')$ for some $v'$
strictly shorter than $v$ such that $v'$ begins in $01$ or $10,$
and $v'a$ is a factor of $\TM$ which is neither a prefix of $\TM$
nor of $\overline{\TM}$. By the induction hypothesis we deduce that there
exists a $k$ such that for all $x'\in \TM\big|_{v'a}$ we have that
$[x']_2$ ends in either $110^{k}$ or in $10^{k+1}$. Moreover,
since every occurrence of $va$ in $\TM$  (and hence of $u$) is the
image of $\tau$ of an occurrence in $\TM$ of $v'a$ it follows that
if $x\in \TM\big|_u$ then $x=2x'$ for some $x'\in \TM\big|_{v'a}$.
Whence $[x]_2$ ends in either $110^{k+1}$ or in $10^{k+2}$. We
have thus proved that if $u$ is neither a prefix of $\TM$ nor of
$\overline{\TM}$ and $u$ begins in $ab$, then there exists a $k$ such
that for any $x\in \TM \big|_u$ either $[x]_2$ ends in $110^{k}$
or in $10^{k+1}$. If only one of these cases occurs, then clearly
property \ref{eqn:point1} holds and we are done. Assume then that
both cases occur, we need to prove that $u$ is a prefix of
$\tau^n(aba)$ for some nonnegative integer $n$ (i. e. that we are
in case \ref{eqn:point2}). It is not difficult to prove (given the
definition of $\tau$) that every factor of $\TM$ of length at
least $4$ either appears only in odd positions or only in even
positions. Since we are assuming that there exist $x, y\in \TM
\big|_u$ such that $[x]_2$ ends in $110^{k}$ and $[y]_2$ ends in
$10^{k+1}$, it must be $k>0$ and $u$ occurs only in even
positions. Again from the definition of $\tau$, it is easy to see
that if $|u|$ is odd, then there exists a unique letter $c$ such
that every occurrence of $u$ is followed by $c$. Hence there
exists a unique $\alpha \in \{ 0, 1, \varepsilon \}$ such that
$|u\alpha|$ is even and $\TM\big|_u = \TM\big|_{u\alpha}$. From
the uniformity of $\tau$, since $u\alpha$ is a factor of $\TM$ of
even length which appears only in even positions, there exists
$u'$ shorter than $u$ such that $\tau(u')=u\alpha$ and
$\TM\big|_{u\alpha}=\{2x, x \in \TM\big|_{u'}\}$. Hence, for each
$x\in \TM \big|_{u'}$ either $[x]_2$ ends in $110^{k-1}$ or in
$10^{k}$ and both cases actually happen, thus, by induction
hypothesis, $u'$ is a prefix of $\tau^n(aba)$ for some $n$ and $u$
is a thus a prefix of $\tau^{n+1}(aba)$.
\end{proof}

We are now able to easily prove item 3. of our main theorem.
First of all, let us observe that it is readily verified that
$\{3,15,18\}\subseteq \TM \big|_{010}$ and $\{35, 47, 82\}\subseteq
\TM \big|_{101}$ and hence $\{2^n \cdot 3,2^n \cdot 15,2^n \cdot
18\}\subseteq \TM \big|_{\tau^n(010)}$ and $\{2^n \cdot 35, 2^n
\cdot 47, 2^n \cdot 82\}\subseteq \TM \big|_{\tau^n(101)}$ which
proves that if $aba \in \{ 010, 101\}$, then $\TM
\big|_{\tau^n(aba)}$ are $2$-summable for every nonnegative
integer n. Clearly then, if $u$ is a prefix of some $\tau^n(aba)$,
$\TM \big|_{u}$ is $2$- summable as well.

Let $u$ be a factor of $\TM$. In case point \ref{eqn:point1} of the preceding lemma holds, 
we have that there exists a nonnegative integer $k$ such that  each $x\in \TM \big|_u$ 
ends $10^k$. But then for any $x,y\in  \TM \big|_u,$ it follows that 
$[x+y]_2$ ends in $0^{k+1}$ and hence $x+y\notin \TM\big|_u$.

Thanks to point \ref{eqn:point2} of the preceding lemma we have thus proved that 
$\TM\big|_u$ is $2$-summable if and only if $u$ is a prefix of $\tau^n(aba)$ for some 
$n$ and $a, b$ distinct letters (considering that prefixes of $\TM$ and of $\overline{\TM}$ are 
as well prefixes of $\tau^n(aba)$). We are left to prove that  if $u$ is neither 
a prefix $\TM$ nor a prefix of $\overline{\TM}$, then $\TM\big|_u$ is not $3$-summable. Of course, 
the statement is trivial if $\TM\big|_u$ is not $2$-summable, so, as observed before, 
we can assume that point \ref{eqn:point2} of Lemma \ref{thm:lemmaTM} holds, that 
is there exists $k$ such that $[x]_2$ ends in $100^k$ or $110^k$  for each $x\in \TM_u$ 
and both cases happen.
Consider three points $x,y,z\in \TM \big|_u$. If $[x]_2$ ends in $100^k$, 
then $[x+y]_2$ ends in $000^k$ or $010^k;$ in either way it follows that 
$x+y \notin \TM \big|_u$. On the other hand if $[x]_2$ and $[y]_2$ both end in $110^k$, 
then $[x+y]_2$ ends in $100^k$ and hence as above it follows that $x+y+z \notin \TM\big|_u$. It 
follows that $\TM \big|_u$ is not $3$-summable, and the statement is complete.
\end{proof}

\noindent\textbf{Remark.} We proved part 1 of Theorem \ref{main} directly using the
numeration system, though it actually follows from parts 2, 3, and
the Finite Sums Theorem \cite {H}.

\noindent As a corollary of Theorem \ref{main} 2. and
Lemma \ref{TM1}  we obtain:

\begin{corollary}\label{notpartreg} $\Sigma^{\infty}$ is not partition regular, i.e., 
there exists a set $A\subseteq \nats$ which is infinite FS-big and a partition of 
$A=A_0\cup A_1$ such that neither $A_i$ is $2^\infty$-summable. \end{corollary}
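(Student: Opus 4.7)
The plan is to take $A = \TM\big|_1$, the set of occurrences of the single letter $1$ in the Thue-Morse word, and show that this set witnesses the failure of partition regularity of $\Sigma^\infty$. The work has essentially already been done in the preceding results, so the proof amounts to assembling them.

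First, I would observe that $1$ is a prefix of $\overline{\TM}$ (since $\overline{\TM}$ begins $1001\ldots$), so Theorem \ref{main}, part 2, applies and yields that $\TM\big|_1 \in \Sigma^\infty$; that is, $A$ is infinite FS-big. Next, I would invoke Lemma \ref{TM1}, which provides an explicit partition $\TM\big|_1 = A_0 \cup A_1$ based on the parity of $\min({\rm supp}(n))$, and which states that neither $A_0$ nor $A_1$ is $2^\infty$-summable. Since $\Sigma^\infty \subseteq \Sigma_2^\infty$, it follows a fortiori that neither $A_i$ lies in $\Sigma^\infty$, so this partition of $A$ witnesses that $\Sigma^\infty$ is not partition regular.

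There is no real obstacle here: the corollary is a straightforward combination of Theorem~\ref{main}(2) and Lemma~\ref{TM1}, with only the trivial verification that $1$ is a prefix of $\overline{\TM}$ and the trivial inclusion $\Sigma^\infty \subseteq \Sigma_2^\infty$ needed to glue them together. The actual mathematical content, namely constructing the partition of $\TM\big|_1$ and checking that each cell fails to be $2^\infty$-summable via the pigeon-hole argument on the minimum of the binary support, has already been carried out in Lemma~\ref{TM1}; the corollary merely repackages these facts in the language of partition regularity.
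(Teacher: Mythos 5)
Your proof is correct and is essentially identical to the paper's own argument: the paper derives Corollary~\ref{notpartreg} precisely by combining Theorem~\ref{main}(2) (applied to $u=1$, a prefix of $\overline{\TM}$, giving that $\TM\big|_1$ is infinite FS-big) with the partition from Lemma~\ref{TM1}. Your extra remark that $\Sigma^\infty\subseteq\Sigma_2^\infty$ makes explicit the trivial step connecting the failure of $2^\infty$-summability to the failure of membership in $\Sigma^\infty$, which the paper leaves implicit.
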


One of the two referees suggested the following alternative proof of Corollary~\ref{notpartreg}. For each positive integer $n,$ let ${\rm supp}_3(n)$ denote the support of the ternary expansion of $n.$ Let $D=FS(\langle 3^n\rangle_{n\in \nats}).$ We note that for any $m,n \in D, $ if $m+n\in D$ then ${\rm supp}_3(m) \cap{\rm supp}_3(n)=\emptyset.$ Let $(E_i)_{i=1}^\infty$ be a partition of $\nats$ into infinite disjoint sets. For each $i\in \nats$ set
\[D_i=\{n \in D: {\rm supp}_3(n) \subseteq E_i\,\,\mbox{and}\,\, \#{\rm supp}_3(n) \leq i\}\]
and put $A=\bigcup_{i=1}^\infty D_i.$ Then  clearly $A \in \Sigma^{\infty}.$ However, let $B_0$ and $B_1$ be a partition of $\nats$ so that no two integers in $\nats$ of the form $x$ and $2x$ are both in $B_0$ or both in $B_1.$ For $i\in \{0,1\},$ put  \[A_i=\{n\in A: \#{\rm supp}_3(n) \in B_i\}.\] It is clear we cannot have $x,y \in A_i$ satisfying $x+y\in A_i$ and $\#{\rm supp}_3(x)=\#{\rm supp}_3(y).$ Thus $A_i$ is not $2^\infty$-summable.\\

We next derive two additional consequences of Theorem~\ref{main}. For this purpose, we recall some terminology which will be needed.
Let $\A$ be a finite non-empty set, and let $\A^\omega$ denote the set of all 
right infinite words $(x_n)_{n\in \nats}$ with $x_n\in \A.$  We endow $\A^\omega$ 
with the topology generated by the metric
\[d(x, y)=\frac 1{2^n}\,\,\mbox{where} \,\, n=\min\{k :x_k\neq y_k\}\] 
whenever $x=(x_n)_{n\in \nats}$ and $y=(y_n)_{n\in \nats}$ are two elements of $\A^\omega$. 
(This is also the product topology when $\A$ has the discrete topology.)
Let $T:\A^\omega \rightarrow \A^\omega$ denote the {\it shift} transformation defined by $T: (x_n)_{n\in \nats}\mapsto (x_{n+1})_{n\in \nats}.$ A point $x\in X$ is said to be {\it uniformly recurrent} in $X$ if for every neighborhood $V$ of $x$ the set
$\{n\,|\, T^n(x)\in V\}$ is syndetic, i.e., of bounded gap. Two points $x,y\in \A^\omega$ are said to be {\it proximal} if for every $\epsilon >0$ there exists $n\in \nats$ such that $d(T^n(x),T^n(y))<\epsilon.$ 

Let $X$ be a closed and $T$-invariant subset of $\A^\omega;$ the pair $(X,T)$ is 
called a {\it subshift} of $\A^\omega$. A subshift $(X,T)$ is said to be {\it minimal}
whenever $X$ and the empty set are the only $T$-invariant closed subsets of $X$.  
 To each $x\in \A^\omega$ is associated the subshift $(\Omega(x),T)$ where $\Omega(x)$ is the shift 
orbit closure of $x$.
A point $x\in \A^\omega$ is called {\it distal}  if the only point in $ \Omega(x)$ proximal to $x$ is $x$ itself.
If $x\in \A^\omega$ is uniformly recurrent, then the associated subshift $(\Omega(x),T)$ is minimal.
And, if $(\Omega(x),T)$ is minimal, then every point of $\Omega(x)$ is uniformly recurrent.  (For the
proofs of the last two assertions see for example \cite[Theorems 1.17 and 1.15]{refF}.)
It is well known that the Thue-Morse word is uniformly recurrent. (See for example
\cite[p. 832]{MoHe1}.)

As an application of Theorem~\ref{main} we have the following corollary.  In the proof of 
this corollary we use some facts from \cite{HS} about the algebraic structure of the Stone-\v Cech compactification
$\beta\nats$ of $\nats$, the points of which are the ultrafilters on $\nats$. Given an ultrafilter 
$p\in\beta\nats$ and a sequence $\langle x_n\rangle_{n=1}^\infty$ in a compact Hausdorff space $X$,
$p$-$\displaystyle\lim_{n\in \nats}x_n$ is the unique point $y\in X$ with the property that for
every neighborhood $U$ of $y$, $\{n\in\nats|x_n\in U\}\in p$.

\begin{corollary}\label{distal} The Thue-Morse word $\TM$ is distal. In particular, for each $n\geq 0,$  exactly one of the sets
$\{T^n(\TM)\big|_0, T^n(\TM)\big|_1\}$ is an IP-set.
\end{corollary}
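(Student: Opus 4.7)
The plan is to establish, via Theorem~\ref{main} and partition regularity of the IP-sets, that every idempotent ultrafilter $p\in\beta\nats$ satisfies $p\cdot\TM=\TM$, and then to invoke the standard characterization from \cite{HS} that a uniformly recurrent point $x$ is distal if and only if every idempotent of $\beta\nats$ fixes $x$. The IP-set dichotomy of the ``in particular'' clause will then fall out immediately.

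First I would verify the key claim: \emph{for every prefix $v$ of $\TM$ and every idempotent $p\in\beta\nats$, $\TM\big|_v\in p$}. Writing $k=|v|$ and using uniqueness of the length-$k$ prefix of $\TM$, one has the finite disjoint decomposition
\[
\nats\setminus\TM\big|_v \;=\; \bigsqcup_{\substack{|v'|=k,\, v'\neq v\\ v'\text{ factor of }\TM}}\TM\big|_{v'},
\]
in which every $v'$ on the right-hand side fails to be a prefix of $\TM$. By parts (2) and (3) of Theorem~\ref{main}, each $\TM\big|_{v'}$ in this union is not an IP-set, and since the IP-sets are partition regular (Finite Sums Theorem, \cite{H}), a finite union of non-IP sets is not IP. Hence $\nats\setminus\TM\big|_v$ is not IP, so it belongs to no idempotent, which means $\TM\big|_v\in p$ for every idempotent $p$.

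Since $p\cdot\TM=\TM$ is equivalent to $\TM\big|_v\in p$ for every prefix $v$ of $\TM$, every idempotent $p\in\beta\nats$ fixes $\TM$. As the $\beta\nats$-action commutes with the shift ($T^n(p\cdot x)=p\cdot T^n x$), I also obtain $p\cdot T^n\TM=T^n\TM$ for every $n\geq 0$ and every idempotent $p$. Combining this with uniform recurrence of $\TM$ and the characterization from \cite{HS} recalled above yields the distality of $\TM$. For the ``in particular'' clause, fix $n\geq 0$: from $p\cdot T^n\TM=T^n\TM$ for every idempotent $p$, comparison of the zeroth letter gives $T^n(\TM)\big|_{t_n}\in p$ for every idempotent, so $T^n(\TM)\big|_{t_n}$ is an IP-set. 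Conversely, if $T^n(\TM)\big|_{1-t_n}$ were IP, it would lie in some idempotent $q$, and then $q\cdot T^n\TM$ would start with $1-t_n\neq t_n$, contradicting $q\cdot T^n\TM=T^n\TM$.

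The main obstacle will be citing the ``distal iff fixed by every idempotent'' characterization from \cite{HS} in the precise form needed (the easy direction---distal implies fixed---has a short proof using that $p\cdot x$ is always proximal to $x$ when $p$ is idempotent, since both are sent to $p\cdot x$ by $p$). The combinatorial heart of the argument, namely that the complement of $\TM\big|_v$ is a finite union of non-IP sets, is a clean consequence of Theorem~\ref{main} combined with partition regularity.
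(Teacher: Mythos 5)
Your proposal is correct, and it reorganizes the argument in a way that genuinely differs from the paper. The paper works pointwise with proximality: given $x\in\Omega(\TM)$ proximal to $\TM$, it invokes \cite[Theorem 19.26]{HS} to produce a minimal idempotent $p$ with $p$-$\lim_{n}T^n(\TM)=x$, and then \cite[Theorem 5.12]{HS} together with Theorem~\ref{main} forces every prefix of $x$ to be a prefix of $\TM$, so $x=\TM$; no partition regularity is needed there. You instead first prove the stronger statement that \emph{every} idempotent fixes $\TM$ (hence every $T^n(\TM)$), via the decomposition of $\nats\setminus\TM\big|_{v}$ into the finitely many occurrence sets of the non-prefix factors of length $|v|$, combined with the Finite Sums Theorem \cite{H}; this is in fact the same trick the paper's own Remark uses to rederive part 1 of Theorem~\ref{main} from parts 2 and 3. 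What your route buys is the ``in particular'' clause essentially for free: knowing that every idempotent fixes $T^n(\TM)$, the zeroth-letter comparison immediately puts $T^n(\TM)\big|_{t_n}$ in every idempotent and $T^n(\TM)\big|_{1-t_n}$ in none, whereas the paper, having only distality in hand at that stage, must run a proximality computation (\cite[Theorems 3.49 and 4.5, Lemma 19.22]{HS}) to reach the same contradiction. What your route costs is the step you yourself flagged: the equivalence ``uniformly recurrent and fixed by every idempotent implies distal'' is not stated in \cite{HS} in that packaged form, and unpacking the direction you actually need reproduces exactly the paper's core argument (a proximal $y\in\Omega(\TM)$ is itself uniformly recurrent because $\Omega(\TM)$ is minimal, so \cite[Theorem 19.26]{HS} yields a minimal idempotent $p$ with $p$-$\lim_{n}T^n(\TM)=y$, whence $y=\TM$ by your fixed-point property); note that the direction you sketch a proof of (distal implies fixed) is the one you do not need. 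Finally, a small point common to both arguments: to conclude from part 3 of Theorem~\ref{main} that the remaining cells are not IP-sets, one needs that every IP-set is $3$-summable in the paper's sense, i.e., that an FS-set of an infinite sequence can be refined to one generated by a sequence satisfying uniqueness of finite sums; this is standard, and the paper relies on it implicitly as well.
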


\begin{proof} Suppose $x \in \Omega(\TM)$  is proximal to $\TM$.
Then, since $\TM$ is uniformly recurrent, we have 
by \cite[Theorem 19.26]{HS} that there exists a (minimal) idempotent ultrafilter 
$p \in \beta \nats$ with $p$-$\displaystyle\lim_{n\in \nats}T^n(\TM)=x$.
Given a prefix $u$ of $x$, $U=\{y\in\A^\omega|u$ is a prefix of $y\}$ is a 
neighborhood of $x$ so $\{n\in\nats|T^n(\TM)\in U\}\in p$; that is $\TM\big|_u \in p$.
Therefore by  \cite[Theorem 5.12]{HS}
$\TM\big|_u$ is an IP-set. By Theorem~\ref{main} it follows that $u$ is a prefix of 
$\TM$ and hence $x =\TM$ as required. Having established that $\TM$ is distal, it follows that 
$T^n(\TM)$ is distal for each $n\geq 0$. Finally, let us fix $n\geq 0$, and let $a\in \{0,1\}$ 
denote the initial symbol of $T^n(\TM).$ We claim that $T^n(\TM)\big|_a$ is an 
IP-set while $T^n(\TM)\big|_{\overline a}$ is not, where $\overline{a}:=1-a.$ 
Since $T^n(\TM)$ is uniformly recurrent, it follows from 
\cite[Theorem 19.23]{HS} that there exists a  idempotent ultrafilter 
$p \in \beta \nats$ with $p$-$\displaystyle\lim_{m\in \nats}T^m(T^n(\TM))=T^n(\TM)$.
Then as above, $T^n(\TM)\big|_a\in p$ so by \cite[Theorem 5.12]{HS} we have that $T^n(\TM)\big|_a$ is an IP-set. 
Now suppose on the other hand that $T^n(\TM)\big|_{\overline a}$ is also an IP-set.
Then by \cite[Theorem 5.12]{HS} there exists an idempotent $q \in \beta \nats$ 
such that $T^n(\TM)\big|_{\overline a}\in q$. We claim that
$q$-$\displaystyle\lim_{m\in \nats}T^m\big(T^n(\TM)\big)$ is proximal to $T^n(\TM)$ for which it
suffices by \cite[Lemma 19.22]{HS} to show that
$q$-$\displaystyle\lim_{r\in \nats}T^r\big( q$-$\displaystyle\lim_{m\in \nats}T^m\big(T^n(\TM)\big)\big)=
q$-$\displaystyle\lim_{k\in \nats}T^k\big(T^n(\TM)\big)$. To this end

\begin{align*}\displaystyle
q\hbox{-}\lim_{r\in \nats}T^r\big( q\hbox{-}\lim_{m\in \nats}T^m\big(T^n(\TM)\big)\big)&=\displaystyle
q\hbox{-}\lim_{r\in \nats}q\hbox{-}\lim_{m\in \nats}T^{r+m}\big(T^n(\TM)\big)\hbox{ by \cite[Theorem 3.49]{HS}}\\
&=\displaystyle (q+q)\hbox{-}\lim_{k\in \nats}T^{k}\big(T^n(\TM)\big)\hbox{ by \cite[Theorem 4.5]{HS}}\\
&=\displaystyle q\hbox{-}\lim_{k\in \nats}T^{k}\big(T^n(\TM)\big)\,.\end{align*}

Since $q$-$\displaystyle\lim_{m\in \nats}T^m\big(T^n(\TM)\big)$ is proximal to $T^n(\TM)$ and
$T^n(\TM)$ is distal, we have that $q$-$\displaystyle\lim_{m\in \nats}T^m(T^n(\TM))=T^n(\TM)$.
Thus, $T^n(\TM)\big|_a\in q$ from which it follows that
$\emptyset=T^n(\TM)\big|_a \cap T^n(\TM)\big|_{\overline a} \in q$, a contradiction.
\end{proof}

\begin{corollary} Let $N$ be a positive integer and set $x=t_Nt_{N-1}\ldots t_0\TM \in \Omega(\TM)$
where $\TM=t_0t_1t_2\ldots.$ 
Consider the partition $\nats =A_0\cup A_1$ where  $A_0=x\big|_0$ and $A_1=x\big|_1.$
Then $A_i-n$ is an IP-set for each $i\in \{0,1\}$ and $0\leq n\leq N.$ On the other hand,
for each $n>N,$ exactly one of the sets $\{A_0-n,A_1-n\}$ is an IP-set.
\end{corollary}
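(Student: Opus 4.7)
The proof splits into the two regimes $n > N$ and $0 \le n \le N$.

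For $n > N$, I would first observe that $T^{N+1}(x) = \TM$, so $T^n(x) = T^{n-N-1}(\TM)$ and hence $A_i-n$ coincides with $T^{n-N-1}(\TM)\big|_i$. Corollary~\ref{distal} applied to $T^{n-N-1}(\TM)$ then gives immediately that exactly one of $A_0-n$, $A_1-n$ is an IP-set.

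For $0 \le n \le N$, set $M = N-n \ge 0$, so that $T^n(x) = t_M t_{M-1}\ldots t_0\TM$. For positions $j \ge M+1$ we have $T^n(x)_j = t_{j-M-1}$, and hence $A_i - n = T^n(x)\big|_i \supseteq (M+1)+\TM\big|_i$. Since any superset of an IP-set is an IP-set, it suffices to establish the following sub-claim: \emph{for every $M \ge 0$ and every $i \in \{0,1\}$, the translated set $(M+1)+\TM\big|_i$ is an IP-set.}

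To prove the sub-claim I would exhibit an explicit FS-sequence. Pick $L$ with $2^L > M+1$ and, for a parameter $\alpha \in \{0,1\}$ to be fixed, set $b_k = 3\cdot 2^{L+\alpha+2(k-1)}$ for $k \ge 1$. The binary supports $\{L+\alpha+2(k-1),\, L+\alpha+2(k-1)+1\}$ are pairwise disjoint. For any nonempty finite $F \subseteq \nats$ with $k_0 = \min F$, writing $S = \sum_{k \in F} b_k$ gives
\[
S-(M+1) = 2^{L+\alpha+2(k_0-1)+1} + \bigl(2^{L+\alpha+2(k_0-1)} - (M+1)\bigr) + \sum_{k \in F,\, k > k_0} b_k,
\]
where the three summands have pairwise disjoint binary supports. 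Invoking the elementary identity $\#{\rm supp}(2^r - D) = r - \#{\rm supp}(D) + 1 - \min{\rm supp}(D)$ (valid for $0 < D < 2^r$), a routine count yields $\#{\rm supp}(S-(M+1)) \equiv L + \alpha - \#{\rm supp}(M+1) - \min{\rm supp}(M+1) \pmod 2$. Choosing $\alpha \in \{0,1\}$ so that this parity equals $i$ forces $S-(M+1) \in \TM\big|_i$, hence $S \in (M+1)+\TM\big|_i$, completing the proof of the sub-claim.

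The main subtlety is the parity computation: subtracting the small constant $M+1$ from a sum of ``block-of-two'' powers $3\cdot 2^r$ produces a borrow chain whose effect on the popcount is governed by the identity above, and the argument is engineered so that the single toggle $\alpha \in \{0,1\}$ selects between landing in $\TM\big|_0$ and $\TM\big|_1$. Everything else --- matching $A_i-n$ with the translated subset $(M+1)+\TM\big|_i$ and invoking Corollary~\ref{distal} for $n>N$ --- is purely bookkeeping.
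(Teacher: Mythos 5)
Your proof is correct, and for the regime $n>N$ it coincides with the paper's: both identify $T^n(x)=T^{n-N-1}(\TM)$ and quote Corollary~\ref{distal}. For the main regime $0\le n\le N$, however, you take a genuinely different route. The paper stays inside the dynamical/ultrafilter framework it built for Corollary~\ref{distal}: it observes that the mirrored word $y=\overline{t}_N\overline{t}_{N-1}\ldots\overline{t}_0\TM$ also lies in $\Omega(\TM)$ (by iterating the morphism on $a\TM\in\Omega(\TM)$), that for $0\le n\le N$ the points $T^n(x)$ and $T^n(y)$ are proximal and begin in distinct symbols, and then invokes \cite[Theorem 19.26 \& Theorem 5.12]{HS} to produce idempotent ultrafilters containing $T^n(x)\big|_0$ and $T^n(x)\big|_1$, so that both $A_0-n$ and $A_1-n$ are IP-sets. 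You instead prove the stronger, purely arithmetic statement that every translate $(M+1)+\TM\big|_i$, $M\ge 0$, $i\in\{0,1\}$, is an IP-set, via the explicit FS-sequence $b_k=3\cdot 2^{L+\alpha+2(k-1)}$. Your computation checks out: the identity $\#{\rm supp}(2^r-D)=r-\#{\rm supp}(D)+1-\min{\rm supp}(D)$ for $0<D<2^r$ is correct (write $2^r-D=\big((2^r-1)-D\big)+1$; complementation leaves $r-\#{\rm supp}(D)$ ones, and adding $1$ collapses the $\min{\rm supp}(D)$ trailing ones into a single new one); the three blocks in your decomposition of $S-(M+1)$ have disjoint supports precisely because $2^L>M+1$; and the resulting parity $L+\alpha-\#{\rm supp}(M+1)-\min{\rm supp}(M+1)$ is independent of both $\#F$ and $\min F$, so toggling $\alpha$ really does select the target letter $i$. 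As for what each approach buys: the paper's argument is only a few lines long given the machinery already assembled (minimality of $\Omega(\TM)$, proximality, the cited results from \cite{HS}), and it explains conceptually why both cells are large for $n\le N$ --- $T^n(x)$ is proximal to a point of $\Omega(\TM)$ beginning with the opposite letter; your argument is elementary and self-contained, uses no ultrafilter machinery outside the appeal to Corollary~\ref{distal} for $n>N$, produces explicit witnessing sequences, and is very much in the spirit of the explicit binary constructions the paper itself uses in the proof of part 2 of Theorem~\ref{main}.
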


\begin{proof} For $a\in \{0,1\}$ we put $\overline{a}=1-a.$ We first note that since $a\TM\in \Omega(\TM)$ for some $a\in \{0,1\},$  by iteratively applying the morphism $0\mapsto 01,$ $1\mapsto 10$ we have that
 both $t_nt_{n-1}\ldots t_0\TM\in \Omega(\TM)$ and $\overline{t}_n\overline{t}_{n-1}\ldots \overline{t}_0\TM \in \Omega(\TM)$ for each $n\geq 0.$ 
 Fix a positive integer $N$ and put $x=t_Nt_{N-1}\ldots t_0\TM$ and $y=\overline{t}_N\overline{t}_{N-1}\ldots \overline{t}_0\TM.$ Then for each $0\leq n\leq N$, we have that $T^n(x)$ and $T^n(y)$ are proximal and begin in distinct symbols. Whence applying \cite[Theorem 19.26 \& Theorem 5.12 ]{HS} 
we deduce that $A_0-n=T^n(x)\big|_0$ and $A_1-n=T^n(x)\big|_1$ are both IP-sets for $0\leq n\leq N.$ On the other hand, applying Corollary~\ref{distal} we see that for each $n>N,$ exactly one of the sets
$\{A_0-n, A_1-n\}$ is an IP-set.
\end{proof}

\end{document}